\documentclass[11pt]{article}

\usepackage[T1]{fontenc}
\usepackage[utf8]{inputenc}
\usepackage{lmodern}
\usepackage{microtype}
\usepackage[a4paper,margin=29mm]{geometry}
\usepackage{amsmath,amssymb,amsthm,mathtools}
\usepackage{enumitem}
\usepackage{booktabs}
\usepackage{hyperref}
\usepackage{color}
\usepackage{float}
\hypersetup{
  hidelinks,
  pdftitle={The Post Correspondence Problem for Free Groups Is Undecidable},
  pdfauthor={Andre Carvalho}
}
\allowdisplaybreaks
\usepackage{graphicx}
\usepackage{tikz}
\usetikzlibrary{arrows.meta,positioning,calc}
\newtheorem{theorem}{Theorem}[section]
\newtheorem{proposition}[theorem]{Proposition}
\newtheorem{lemma}[theorem]{Lemma}
\newtheorem{corollary}[theorem]{Corollary}
\theoremstyle{definition}

\newtheorem{remark}[theorem]{Remark}

\newcommand{\eps}{\varepsilon}
\newcommand{\Eq}{\operatorname{Eq}}
\newcommand{\first}{\operatorname{first}}
\newcommand{\inlab}{\operatorname{in}}
\newcommand{\outlab}{\operatorname{out}}
\newcommand{\red}[1]{\overline{#1}}

\newcommand{\wt}{\widetilde}

\newcommand{\Z}{\mathbb{Z}}

\newcommand{\End}{\text{End}}

\newcommand{\Fix}{\mathrm{Fix}}

\newcommand{\mc}{\mathcal}
\newcommand{\T}{\mathcal T}

\title{The Post Correspondence Problem for free groups is undecidable}
\author{Andr\'e Carvalho\\
\small Research Center in Mathematics and Applications (CIMA)\\
\small Department of Mathematics, School of Sciences and Technology of the University of \'Evora\\
\small Rua Rom\~ao Ramalho, 59, 7000--671 \'Evora, Portugal\\
\small \texttt{andrecruzcarvalho@gmail.com}}
\date{}

\begin{document}
\maketitle

\begin{abstract}
	We prove that the Post Correspondence Problem for finitely generated free groups is undecidable, even when one of the two homomorphisms is injective and has finite-index image. This resolves a longstanding open problem in algorithmic group theory. 

	The proof proceeds through a   connection with finite-state transducers. Given a cyclic tag system $\mathcal C$, we effectively construct a finite partial deterministic inverse transducer $\mathcal T_{\mathcal C}$ whose fixed-point set is nontrivial if and only if $\mathcal C$ halts. We then associate to any such transducer two homomorphisms
	\(
	g,h\colon F_Y\longrightarrow F_A,
	\)
	with $h$ injective, such that their equalizer is nontrivial precisely when the transducer has a nontrivial fixed loop. As an immediate consequence,  the rank of these
	equalizers cannot be computed in general, answering a question posed by Stallings in 1984.

We further prove that there is no algorithm which decides whether the fixed subgroup of a virtual endomorphism of a finitely generated free group is trivial. Finally, we apply the main result to show that the stabilizer problem is undecidable for free subgroups of \(\operatorname{SL}_4(\mathbb Z)\), and that the upper-right-corner problem is undecidable for free subgroups of \(\operatorname{SL}_5(\mathbb Z)\), even when the given generators are promised to form a free basis, improving on recent results of Breuillard and Kocharyan.
\end{abstract}

\section{Introduction}

The Post Correspondence Problem is one of the classical sources of
undecidability in theoretical computer science. Given two homomorphisms
\(
\phi,\psi\colon X^*\longrightarrow Y^*
\)
between finitely generated free monoids, it asks whether there exists a
nonempty word \(w\in X^*\) such that
\(
w\phi=w\psi.
\)
Equivalently, the problem asks whether the equalizer
\[
\Eq(\phi,\psi)
=
\{w\in X^*\mid w\phi=w\psi\}
\]
is nontrivial. Post proved in 1946 that no algorithm decides this problem
\cite{[Pos46]}.

The analogous problem for free groups has remained open. Given finite
alphabets \(A\) and \(B\) and homomorphisms
\(
\phi,\psi\colon F_A\longrightarrow F_B,
\)
the \emph{Post Correspondence Problem for free groups} asks whether
\[
\Eq(\phi,\psi)
=
\{x\in F_A\mid x\phi=x\psi\}
\]
contains a nontrivial element. Although   progress has been made
for other classes of groups \cite{[MNU14],[CLL24]}, the problem for free
groups has been regarded as an important open question in
algorithmic group theory (see
\cite[Problem~5.1.4]{[DKMM19]},
\cite[Section~1.4]{[MNU14]}, \cite[Problem (F41)]{[KMS26]} and the survey \cite{[CL21]}).

The main result of this paper settles the problem in a stronger form.

\newtheorem*{mainintro3}{Theorem A}
\begin{mainintro3}
	There is no algorithm which, given homomorphisms
	\(
	g,h\colon F_Y\longrightarrow F_A
	\)
	between finitely generated free groups, with $h$ injective and with finite-index image, decides whether
	\(
	\Eq(g,h)\neq\{1\}.
	\)
 
\end{mainintro3}

Goldstein and Turner proved that if at least one of two homomorphisms
between finitely generated free groups is injective, then their equalizer
is finitely generated \cite{[GT86]}. Thus, the equalizers occurring in the 
theorem above are not  infinitely generated
subgroups. They are finitely generated free groups, but there is no
algorithm that determines whether their rank is zero. It follows immediately that there is no algorithm which computes the rank or a free basis of $\Eq(g,h)$. In particular, this gives a negative answer to Stallings's Rank and Basis Problems \cite{[Sta87]}.
 
	The undecidability of triviality remains valid when the target group is
	\(F_2\), and it also remains valid for pairs of endomorphisms of a single
	finitely generated free group, one of which is injective.

\paragraph{Fixed subgroups and equalizers.}

The result contrasts sharply with the theory of fixed subgroups. If
\(
\alpha\colon F_A\longrightarrow F_A
\)
is an endomorphism, then
\[
\Fix(\alpha)
=
\{x\in F_A\mid x\alpha=x\}
=
\Eq(\alpha,\operatorname{id})
\]
is a special case of an equalizer.

The study of fixed subgroups has a long history. Gersten and Cooper independently proved that the fixed subgroup of an
automorphism of a finitely generated free group is finitely generated
\cite{[Ger87],[Coo87]}. Bestvina and Handel subsequently developed the theory of train tracks and proved Scott's
conjecture \cite{[BH92]}: if \(\alpha\) is an automorphism of a free group of rank \(n\),
then
\(
\operatorname{rank}\Fix(\alpha)\leq n.
\)
 Imrich and Turner introduced stable images and extended
these finite-generation and rank results to arbitrary endomorphisms
\cite{[IT89]}.

The corresponding algorithmic problem is more complicated.
Bogopolski and Maslakova constructed an algorithm computing a basis of the
fixed subgroup of an automorphism \cite{[BM16]}. More recently, Mutanguha
proved that stable images of free-group endomorphisms are computable
\cite{[Mut22]}, allowing the Bogopolski--Maslakova algorithm to be applied
to arbitrary endomorphisms. Consequently, fixed subgroups of free-group
endomorphisms are effectively computable and, in particular, their
triviality is decidable. We refer to \cite{[Ven14]} for a survey.

The case of equalizers is much more complicated. When neither of the two maps is
injective, the equalizer may fail to be finitely generated. If one map is
injective, finite generation is given by the theorem of Goldstein and
Turner \cite{[GT86]}, but the corresponding rank and basis problems have
remained open in general.

Stallings asked whether the rank of the equalizer of two injective
endomorphisms of a free group of rank \(n\) is bounded above by \(n\).
This was recently disproved by Lei and Zhang \cite{[LZ26]}. Stallings also asked
whether the rank of an equalizer of two homomorphisms between finitely
generated free groups is computable \cite{[Sta87]}; we refer to this as
the \emph{Rank Problem}. The problem of computing a free basis of the
equalizer, the \emph{Basis Problem}, was shown in \cite{[CL21]} to be
equivalent to the Rank Problem.

Positive results are known for several restricted classes of homomorphisms
\cite{[BM16],[CL20],[CL20b],[FH18]}. In rank two, Logan proved that
finitely generated equalizers have rank  at most two \cite{[Log22]}. Related questions concerning
uniformly continuous extensions of endomorphisms to the boundary have also
been studied in \cite{[CS26]}. Our theorem shows that
these positive results cannot extend to arbitrary finitely generated free
groups.

\paragraph{Virtual endomorphisms and complete transducers.} Let $G$ be a group and $H$ be a finite index subgroup of $G$. We say that a homomorphism from $H$ to $G$ is a virtual endomorphism of $G$. 
 We also prove that there is no algorithm which, given a finite-index subgroup
\(H \leq F\) and a homomorphism
\(
\theta \colon H \longrightarrow F,
\)
decides whether \(\operatorname{Fix}(\theta)\) is trivial, showing that virtual endomorphisms are much harder computationally than endomorphisms. Using a transducer
associated with \(H\), we further show that the rationality theorem for fixed
points of complete inverse transducers obtained by Silva \cite[Theorem 3.2]{[Sil13]} is necessarily non-effective: no algorithm
constructs a finite automaton recognizing their reduced fixed-point language. In fact, no algorithm decides triviality of the fixed-point set of such a transducer. Concretely, we show the following result:

\newtheorem*{virtualtransducertheorem}{Theorem C}
\begin{virtualtransducertheorem}
	The following problems are undecidable.
	\begin{enumerate}
		\item[\textup{(i)}]
		Given a finitely generated free group \(F\), a finite-index subgroup
		\(H\leq F\), and a homomorphism
		\(
		\theta\colon H\longrightarrow F,
		\)
		decide whether
		\(
		\operatorname{Fix}(\theta)=\{1\}.
		\)
		
		\item[\textup{(ii)}]
		Given a finite complete deterministic inverse transducer
		\(\mathcal T\), decide whether
		\[
		\operatorname{Fix}(\widetilde{\mathcal T})=\{1\}.
		\]
	\end{enumerate}
	Consequently, there is no algorithm which constructs, from a finite complete
	deterministic inverse transducer \(\mathcal T\), a finite automaton
	recognizing the reduced words representing the elements of
	\(
	\operatorname{Fix}(\widetilde{\mathcal T}).
	\)
\end{virtualtransducertheorem}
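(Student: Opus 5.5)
We reduce Theorem A to part (i) and then derive part (ii) and the non-effectivity statement from (i).

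For (i), take an instance \(g,h\colon F_Y\to F_A\) of Theorem A, with \(h\) injective and \(H:=F_Yh\) of finite index in \(F_A\). Since \(h\colon F_Y\to H\) is an isomorphism onto a finite-index subgroup, we can define the virtual endomorphism
\[
\theta:=h^{-1}g\colon H\longrightarrow F_A .
\]
Given the images of the generators, both \(H\) and \(\theta\) are computable. Stallings foldings give a basis of \(H\) together with the expression of each basis element in terms of the \(yh\), and \(\theta\) is \(g\) applied to those expressions.

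We then check that \(x\in\Fix(\theta)\) if and only if \(x=wh\) with \(wg=wh\). Indeed, \(x\theta=x\) means \((xh^{-1})g=x\); setting \(w=xh^{-1}\) this reads \(wg=wh\). Hence \(h\) restricts to a bijection \(\Eq(g,h)\to\Fix(\theta)\), and since \(h\) is injective, \(\Fix(\theta)=\{1\}\) if and only if \(\Eq(g,h)=\{1\}\). Theorem A therefore gives (i).

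For (ii), we encode \(\theta\) by a finite complete deterministic inverse transducer whose induced map \(\widetilde{\mathcal T}\) has the same fixed points as \(\theta\).
\begin{enumerate}
\item Take the Stallings (Schreier) graph of \(H\) with respect to \(A\). Since \(H\) has finite index, this graph is a complete, deterministic and inverse automaton, i.e.\ a permutation automaton with basepoint \(q_0\).
\item Fix a spanning tree \(T\) and, for each state \(q\), the geodesic \(t_q\) from \(q_0\) to \(q\) in \(T\). Label the edge \(q\xrightarrow{a}q'\) with output \(u_{q,a}:=(t_q\,a\,t_{q'}^{-1})\theta\in F_A\). This is well defined because \(t_q a t_{q'}^{-1}\in H\).
\item Reading a word \(x\) from \(q_0\) then outputs \((xt_{q'}^{-1})\theta\), where \(q'\) is the state reached, and inverse edges output the inverses.
\item Take the fixed-point set of \(\widetilde{\mathcal T}\) to be the elements \(x\) labelling closed paths at \(q_0\), i.e.\ \(x\in H\), whose output equals \(x\) in \(F_A\). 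Then \(\Fix(\widetilde{\mathcal T})=\Fix(\theta)\).
\end{enumerate}

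The main obstacle is matching the paper's precise definition of \(\widetilde{\mathcal T}\) and of its fixed points. If those definitions require the path to end at the initial state, or allow arbitrary terminal states, then the construction has to be adjusted, for example by making \(q_0\) the only terminal state. If instead outputs must be letters or words over a specified alphabet, we would replace them by reduced words over \(A^{\pm}\), which a transducer over the involutive alphabet permits. Inverseness of the transducer holds because the underlying automaton is a permutation automaton and each inverse edge carries the inverse output.

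For the final consequence, suppose an algorithm produced a finite automaton recognizing the reduced words representing elements of \(\Fix(\widetilde{\mathcal T})\). Emptiness of the language minus the empty word is decidable for finite automata, so this would decide whether \(\Fix(\widetilde{\mathcal T})=\{1\}\), contradicting (ii).
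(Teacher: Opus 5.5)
\textbf{There is a genuine gap in (ii), and your proof of (i) is circular.}

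\textbf{Part (ii).} Your transducer is complete, deterministic and inverse, and on closed paths at \(q_0\) it computes \(\theta\). Step 4, however, does not prove anything about it; it redefines its fixed-point set. In this paper a transducer has no terminal states. The map \(\wt\T\) is defined on every readable word, so on all of \(F_A\) when \(\T\) is complete, and \(\Fix(\wt\T)\) is taken over all words, whatever state they end at.

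For your transducer, a word \(x\) ending at \(q\) is fixed if and only if \(h\theta=h\,t_q\) with \(h=xt_q^{-1}\in H\). This can happen with \(q\neq q_0\) even when \(\Fix(\theta)=\{1\}\). For example, take \(F=\langle a\rangle\), \(H=\langle a^2\rangle\) and \(a^2\theta=a^3\). Then \(\Fix(\theta)=\{1\}\), your transducer has edges \(q_0\xrightarrow{a\mid 1}q_1\) and \(q_1\xrightarrow{a\mid a^3}q_0\), and reading \(a^3\) outputs \(a^3\). So \(a^3\in\Fix(\wt\T)\), and \(\theta\mapsto\T\) is not a reduction from (i) to (ii). The adjustment you postpone is the actual content of (ii), and ``making \(q_0\) terminal'' is not available in this framework.

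The paper supplies this adjustment only for its specific instances, not for an arbitrary \(\theta\):
\begin{enumerate}
\item It conjugates the outputs of \(\mc R_{\mc C}\) state by state with \(c_i=d_0^{-1}p^i\) (Lemma~\ref{lem:state-conjugation}).
\item Every transition of \(\mc R_{\mc C}\) preserves the \(p\)-exponent sum \(\chi\), so \(\chi(x\wt{\mc S}_{\mc C})-\chi(x)=j\) when \(x\) ends at state \(j\).
\item Hence every fixed point ends at state \(0\), so it lies in \(H_{\mc C}\), where \(\wt{\mc S}_{\mc C}\) coincides with \(\theta_{\mc C}\).
\end{enumerate}
Even applied to \(\theta_{\mc C}\), your spanning-tree outputs amount to conjugating by \(c_q=(t_q\wt{\mc R}_{\mc C})^{-1}d_0^{-1}\). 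Since tree paths use only the letters \(0,1\), \(\chi(c_q)=-1\) for every \(q\), so the \(\chi\)-count no longer separates the states. The twist by \(p^i\) is exactly what your construction lacks.

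\textbf{Part (i).} The implication from the finite-index clause of Theorem~A to (i), via \(\theta=h^{-1}g\), is correct. But in the paper that clause is obtained only as a corollary of Theorem~\ref{thm:complete-fixed-point}. That theorem's proof ends by appealing to the proof of Theorem~\ref{thm:virtual-triviality}, i.e.\ to (i) itself. As your own computation shows, the two statements are essentially equivalent, so citing one to obtain the other is circular.

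The paper proves (i) directly:
\begin{enumerate}
\item Delete \(s\) and the \(\#\)-edge from \(\T_{\mc C}\) to obtain the complete transducer \(\mc R_{\mc C}\).
\item Let \(H_{\mc C}\) be the stabilizer of state \(0\), which has finite index, and set \(x\theta_{\mc C}=d_0(x\wt{\mc R}_{\mc C})d_0^{-1}\).
\item The path \(\#x\#^{-1}\) in \(\T_{\mc C}\) outputs \(\#d_0(x\wt{\mc R}_{\mc C})d_0^{-1}\#^{-1}\), so it is fixed if and only if \(x\theta_{\mc C}=x\).
\item Every reduced loop at \(s\) has this form, so Theorem~\ref{thm:partial trans} gives \(\Fix(\theta_{\mc C})\neq\{1\}\) if and only if \(\mc C\) halts.
\end{enumerate}

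Your last step, from (ii) to the non-constructibility of an automaton, is fine.
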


\paragraph{Algorithmic problems in integral matrix groups.} The free monoid version of the PCP has been used with great success to obtain undecidability results in different areas.  Following this work, we show that the upper-right corner problem is undecidable for free subgroups of $\operatorname{SL}_5(\Z)$ and that the stabilizer problem is undecidable for free subgroups of $\operatorname{SL}_4(\Z)$, improving recent results of Breuillard and Kocharyan \cite{[BK25]}.

\newtheorem*{matrixconsequencestheorem}{Theorem D}
\begin{matrixconsequencestheorem}
	The following problems are undecidable.
	\begin{enumerate}
		\item[\textup{(i)}]
		Given a finite tuple of matrices promised to be a free basis of a subgroup
		\(
		G\leq\operatorname{SL}_4(\mathbb Z),
		\)
		decide whether
		\[
		\operatorname{Stab}_G(e_1)
		=
		\{M\in G\mid Me_1=e_1\}
		\]
		contains a nonidentity element.
		
		\item[\textup{(ii)}]
		Given a finite tuple of matrices promised to be a free basis of a subgroup
		\(
		G\leq\operatorname{SL}_5(\mathbb Z),
		\)
		decide whether \(G\) contains a nonidentity matrix whose
		\((1,5)\)-entry is zero.
	\end{enumerate}
\end{matrixconsequencestheorem}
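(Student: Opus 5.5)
We reduce both parts to Theorem A, using its strengthened forms: target group \(F_2\), and \(h\) injective with finite-index image. The idea is the standard linearization of equalizers via Sanov-type faithful representations of \(F_2\) in \(\operatorname{SL}_2(\mathbb Z)\), together with a block-triangular "difference" construction.

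Fix a faithful embedding \(\rho\colon F_2\to\operatorname{SL}_2(\mathbb Z)\), for instance via the Sanov matrices \(\begin{pmatrix}1&2\\0&1\end{pmatrix}\) and \(\begin{pmatrix}1&0\\2&1\end{pmatrix}\). Given \(g,h\colon F_Y\to F_2\) as in Theorem A, with \(h\) injective, put \(G_y=\rho(yg)\) and \(H_y=\rho(yh)\) for each \(y\in Y\).

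For (i) we work in \(\operatorname{SL}_4(\mathbb Z)\). To each \(y\in Y\) we associate a matrix \(M_y\) whose action on a suitable \(4\)-dimensional space encodes the pair \((G_y,H_y)\). Then \(M_w e_1=e_1\) should hold exactly when \(\rho(wg)=\rho(wh)\), that is, when \(w\in\Eq(g,h)\).

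The first candidate is \(M_y=G_y\otimes (H_y^{-1})^{T}\), the action of \((G_y,H_y)\) on \(2\times 2\) matrices \(X\mapsto G_yXH_y^{-1}\). This lies in \(\operatorname{SL}_4(\mathbb Z)\), and the vector \(e_1\) is replaced by the identity matrix \(I\). The map \(w\mapsto M_w\) is a homomorphism, since it is the composite of the homomorphism \(w\mapsto(\rho(wg),\rho(wh))\) with the faithful action of \(\operatorname{SL}_2\times\operatorname{SL}_2\) modulo \(\pm1\).

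We then check three things.
\begin{itemize}
\item[(a)] The tuple \((M_y)\) freely generates a free group. The kernel of \(w\mapsto M_w\) consists of those \(w\) with \(\rho(wg)=\lambda\rho(wh)\) for a scalar \(\lambda=\pm1\), and also \(\rho(wh)=\pm I\). Injectivity of \(h\) and torsion-freeness of \(\rho(F_2)\), which contains no \(-I\) for Sanov matrices, kill this kernel.
\item[(b)] The stabilizer of \(I\) is \(\{w: \rho(wg)=\rho(wh)\}=\Eq(g,h)\).
\item[(c)] We conjugate by an integral change of basis sending \(I\) to \(e_1\). This is possible because \(I\) is a primitive vector of \(\mathbb Z^4\), so it extends to a \(\mathbb Z\)-basis.
\end{itemize}
Triviality of the stabilizer is then equivalent to \(\Eq(g,h)=\{1\}\), and Theorem A gives undecidability.

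For (ii) we pass to \(\operatorname{SL}_5(\mathbb Z)\) by a standard trick that turns the condition \(M_we_1=e_1\) into a vanishing corner. Let \(f\) be the row functional \(e_1^T\) and \(v=e_1\). The entry \((M_we_1)_j-\delta_{1j}\) is a linear functional of \(M_w\).

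The obstacle is that a single corner entry detects only one scalar, not the whole vector equality. We address this with a quadratic form. On the \(4\)-dimensional space of \(2\times2\) matrices, consider the bilinear pairing \(\langle X,Y\rangle=\operatorname{tr}(X\,\mathrm{adj}(Y))\); its associated quadratic form is \(\det\). We look for a linear functional \(\ell\) with
\[
\ell\bigl(\rho(wg)\rho(wh)^{-1}\bigr)=0 \iff \rho(wg)\rho(wh)^{-1}=I
\]
on the group elements. The choice \(\ell(X)=\operatorname{tr}(X)-2\) fails in general, because parabolic elements also have trace \(2\). If we instead use a Sanov subgroup all of whose nontrivial elements are hyperbolic, so that no nontrivial element has trace \(2\), then \(\operatorname{tr}(X)=2\) forces \(X=I\).

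Concretely, we precompose \(\rho\) with an embedding of \(F_2\) into a purely hyperbolic free subgroup of \(\operatorname{SL}_2(\mathbb Z)\), such as the commutator subgroup of \(\operatorname{SL}_2(\mathbb Z)\) restricted appropriately. It is still free and has finite index, so the finite-index hypothesis on \(h\) is harmless.

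Then \(\operatorname{tr}(\rho(wg)\rho(wh)^{-1})-2\) is an affine function of \(M_w\). Bordering \(M_y\) into a \(5\times5\) unipotent-augmented matrix \(\begin{pmatrix}1&a&0\\0&M_y&b\\0&0&1\end{pmatrix}\) makes the \((1,5)\) entry of the product equal to this expression. Here \(a\) and \(b\) are chosen so that the corner reads \(\ell\) applied to \(M_w\) minus \(\ell(I)\).

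Freeness is inherited from (a), since the bordered map is still injective. This proves (ii). The main obstacle is guaranteeing that the corner entry vanishes only at the equalizer; this hinges on choosing \(\rho\) without nontrivial parabolic or trace-\(2\) elements. If that fails, we fall back on a larger-dimensional positive-definite-form trick, at the cost of dimension.
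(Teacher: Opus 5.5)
Part (i) is the paper's argument: the action $X\mapsto AXB^{-1}$ of $\rho(F_2)\times\rho(F_2)$ on $M_2(\mathbb{Z})$, faithfulness because $\rho(F_2)$ contains no $-I$, and an integral basis beginning with $I_2$. Part (ii) has the right idea, namely detecting $AB^{-1}=I_2$ through $\operatorname{tr}(AB^{-1})-2$ with a purely hyperbolic $\rho$ and moving this affine functional into a corner by unipotent bordering. But it does not go through as written.

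\textbf{The bordering.} Your matrix $\begin{pmatrix}1&a&0\\0&M_y&b\\0&0&1\end{pmatrix}$, with $M_y$ of size $4\times4$, is $6\times6$. So you land in $\operatorname{SL}_6(\mathbb{Z})$, and the corner is the $(1,6)$ entry, not $(1,5)$. The generator corners also cannot be $0$: for $w=y$ your claim would force $\operatorname{tr}(\rho(yg)\rho(yh)^{-1})=2$. What works is to take the bordered generators to be conjugates of $\operatorname{diag}(1,M_y,1)$ by one fixed unipotent matrix. That makes the assignment a homomorphism, with corner $-f(M_w-I)v$, where $f$ is the trace functional and $v$ is the coordinate vector of $I_2$; the generators then have nonzero corners.

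The idea you are missing for dimension five is to absorb one border into the choice of basis. The trace is a primitive functional on $M_2(\mathbb{Z})$, so $(E_{11},E_{12},E_{21},E_{22}-E_{11})$ is an integral basis whose first coordinate is the trace. Then a single column border suffices: take $P^{-1}\operatorname{diag}(\Lambda(A,B),1)P$ with $P=\begin{pmatrix}I_4&v\\0&1\end{pmatrix}$ and $v=(2,0,0,1)^{\mathsf T}$. Its $(1,5)$ entry is exactly $\operatorname{tr}(AB^{-1})-2$.

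\textbf{The hyperbolic subgroup.} Your choice is wrong, and the property you ascribe to it cannot hold. Every finite-index subgroup of $\operatorname{SL}_2(\mathbb{Z})$ contains some $\begin{pmatrix}1&n\\0&1\end{pmatrix}$ with $n\geq1$, a nonidentity element of trace $2$. This includes the commutator subgroup, which has index $12$. Explicitly, with $A=\begin{pmatrix}2&1\\1&1\end{pmatrix}$ and $B=\begin{pmatrix}1&1\\1&2\end{pmatrix}$, the commutator $C=[A,B]=\begin{pmatrix}-7&6\\-6&5\end{pmatrix}$ has trace $-2$. So $C^2\neq I$ has trace $2$.

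With such a $\rho$, the corner vanishes whenever $\rho(wg)\rho(wh)^{-1}$ is a nontrivial unipotent, and such $w$ need not lie in $\Eq(g,h)$. The reduction therefore fails in the direction from ``a nonidentity matrix has zero corner'' to ``$\Eq(g,h)\neq\{1\}$''.

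A purely hyperbolic free subgroup necessarily has infinite index. One example is the subgroup the paper uses, generated by $\begin{pmatrix}3&2\\1&1\end{pmatrix}$ and $\begin{pmatrix}1&1\\2&3\end{pmatrix}$, all of whose nonidentity elements have $|\operatorname{tr}|>2$. Another is a Schottky group generated by high powers of two hyperbolic matrices with distinct fixed points. Finite index is irrelevant anyway, since only injectivity of $h$ and $\rho$ is used. Note also that Sanov's generators are themselves parabolic, so a ``Sanov subgroup all of whose nontrivial elements are hyperbolic'' does not exist.

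With these two corrections, your outline becomes the paper's proof.
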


The stabilizer result lowers the previously known integral dimension bound
from nine to four. The upper-right-corner result lowers the corresponding
dimension bound from nine to five and, at the same time, establishes
undecidability over the integers, answering a question from \cite{[BK25]}. These applications show that the
free-group Post Correspondence Problem can be used as a
source of undecidability in  matrix groups, playing a similar role to the monoid PCP.

\paragraph{Outline of the proof of the main result.}

The proof reduces the halting problem of cyclic tag systems to  deciding triviality of the fixed-point set for finite inverse
transducers and that to an instance of PCP for free groups.
\begin{align*}
&\text{halting of cyclic tag systems}\\
\ \leq\
&\text{existence of nontrivial fixed points of finite inverse transducers}\\
\ \leq\
&\text{nontriviality of equalizers of free-group homomorphisms}.
\end{align*}

The first reduction is the main construction of the paper. A finite partial
deterministic inverse transducer is a finite-state device which reads words
over an alphabet with formal inverses and produces output words, with every
transition accompanied by the corresponding inverse transition. After free
reduction of the output, such a transducer $\T$ defines a partial map $\wt \T$ on a
finitely generated free group. Inverse transducers have previously been
used in the study of fixed points of free-group maps
\cite{[Sil13]}.

Given a cyclic tag system \(\mathcal C\), we effectively construct a finite
partial deterministic inverse transducer
\(\mathcal T_{\mathcal C}\). The construction uses the discrepancy
\[
\Delta(w)
=
w^{-1}\bigl(w\widetilde{\mathcal T}_{\mathcal C}\bigr)
\]
between the input read so far and its output. Along a distinguished path in
the transducer, the freely reduced discrepancy acts as the queue of the tag
system. Reading its first binary letter simulates one step of the cyclic
tag system, while auxiliary marker letters record the number of simulated
steps and prevent the discrepancy from collapsing to the identity.

If the tag system halts, its queue eventually contains only marker letters.
These letters can then be cyclically read without changing the discrepancy,
producing a nontrivial fixed loop in the transducer. We also prove
that every nontrivial fixed point must be represented by a closed path. We can show that the existence
of such a loop forces a repetition in the computation. The marker
recording the number of steps excludes such a repetition when the tag
system does not halt.

Since cyclic tag systems are Turing-complete \cite{[Coo04]}, this gives the
following intermediate result.

\newtheorem*{theoremintro2}{Theorem B}
\begin{theoremintro2}
	It is undecidable whether a finite partial deterministic inverse transducer
	has a nontrivial fixed point.
\end{theoremintro2}

More precisely, from a cyclic tag system \(\mathcal C\), we effectively
construct a finite partial deterministic inverse transducer
\(\mathcal T_{\mathcal C}\) such that the following conditions are
equivalent:
\begin{enumerate}
	\item[\textup{(i)}] \(\mathcal C\) halts;
	\item[\textup{(ii)}]
	\(\mathcal T_{\mathcal C}\) has a nontrivial fixed loop at its
	distinguished state \(s\);
	\item[\textup{(iii)}]
	\(
	\Fix\bigl(\widetilde{\mathcal T}_{\mathcal C}\bigr)
	\neq
	\{1\}.
	\)
\end{enumerate}
Moreover, every fixed point of
\(\widetilde{\mathcal T}_{\mathcal C}\) is represented by a closed path at
\(s\). This last assertion is essential: it excludes fixed points arising
from unintended paths and allows the second reduction to use the subgroup
of closed input paths.

The second reduction is algebraic. Let
\(
\mathcal T=(Q,q_0,\delta,\lambda)
\)
be a finite partial inverse transducer, and let
\[
K
=
\{w\in F_A
\mid
w\text{ labels a closed input path at }q_0\}.
\]
The input automaton of \(\mathcal T\) is a finite inverse automaton, and
hence \(K\) is a finitely generated subgroup of \(F_A\). A free basis
\(
B=\{b_1,\ldots,b_r\}
\)
of \(K\) can be computed from the corresponding Stallings automaton.

Let
\(
Y=\{y_1,\ldots,y_r\}.
\)
We define homomorphisms
\[
g,h\colon F_Y\longrightarrow F_A
\]
by letting \(y_i h=b_i\) and letting \(y_i g\) be the output produced by
the transducer while reading the closed path labelled by \(b_i\). Since
\(B\) is a free basis of \(K\), the homomorphism \(h\) is injective.
Moreover,
\[
xg=xh
\]
holds for a nontrivial \(x\in F_Y\) precisely when \(xh\) labels a
nontrivial fixed loop of \(\mathcal T\). Consequently,
\[
\Eq(g,h)\neq\{1\}
\quad\Longleftrightarrow\quad
\mathcal T
\text{ has a nontrivial fixed loop at }q_0.
\]

Combining this construction with Theorem B proves the basic undecidability of the Post Correspondence  Problem; passing to complete transducers in Section 5 yields the finite-index image condition in Theorem A.
The construction is effective at every stage
and preserves the injectivity of one of the two homomorphisms.

 Since
every finitely generated free group embeds in \(F_2\), undecidability
persists for homomorphisms
\[
g,h\colon F_Y\longrightarrow F_2.
\]
By composing with suitable embeddings, the result can also be formulated
for pairs of endomorphisms of a single finitely generated free group. The
construction ensures that one of the maps is injective.

The paper is organized as follows. In Section~2 we introduce cyclic tag
systems, partial inverse transducers, and the basic properties of
discrepancies. In Section~3 we construct the transducer
\(\mathcal T_{\mathcal C}\) associated with a cyclic tag system and prove
Theorem~B. In Section~4 we encode fixed loops of inverse transducers as
equalizers of free-group homomorphisms and prove  Theorem A. In Section~5 we pass to complete
inverse transducers, prove the finite-index strengthening in Theorem~A,
and establish Theorem~C. In Section~6 we prove the matrix-group
consequences collected in Theorem~D.

 \section{Preliminaries and notation}
 We start by introducing preliminary results about cyclic tag systems and transducers. For further details, the reader is referred to \cite{[Coo04],[Ric08],[Ber79]}. 
 \subsection{Cyclic tag systems}
 A cyclic tag system $\mathcal C$ is a tuple of words $(u_0,\ldots, u_{m-1})$ over $\{0,1\}$, together with a starting word $w_0\in\{0,1\}^*$ and a queue. The queue starts with $w_0$ and evolves according to the following rule: if, at an instant $i$, the queue has value $u=au'$, with $a\in \{0,1\}$, we update it to $u'\alpha_i(a)$, where $\alpha_i(a)=\varepsilon$ if $a=0$ and $\alpha_i(a)=u_{i \text{ mod $m$}}$ if $a=1$. 
 
 We say that the system halts when the queue is empty. We will usually denote by $W_n$ the value of the queue after $n$ steps.
 
 Cook \cite{[Coo04]} showed that cyclic tag systems can emulate any tag system, which are Turing-complete. Hence, the halting problem for cyclic tag systems is undecidable (see also \cite{[Ric08]}). 

\subsection{Transducers}

For a finite alphabet $A$, put
\(
 \widetilde A=A\cup A^{-1}.
\)
A word over $\widetilde A$ represents an element of $F_A$.  For a word $w\in \wt A^*$, let $\red{w}$ denote its freely reduced form, and let $|w|$ denote the length of $\red{w}$.  If $w\neq1$, let $\first(w)$ be the first letter of $\red{w}$.  When there is no risk of confusion, we use the same notation for a word and the element of the free group that it represents.

A finite partial deterministic $A$-transducer is a quadruple $\T=(Q,q_0,\delta,\lambda)$ where $Q$ is a finite set of states, $q_0\in Q$ is the initial state, and $\delta:Q\times A\to Q$ and $\lambda:Q\times A\to A^*$ are partial functions with the same domain.
Both $\delta$ and $\lambda$ can be naturally extended to partial mappings $Q\times A^*\to Q$ and $Q\times A^*\to A^*$, respectively.

A finite partial deterministic $\wt A$-transducer is said to be inverse if 
$$p\xrightarrow{a|w} q \quad\text{ is an edge of $\T$ if and only if } q\xrightarrow{a^{-1}|w^{-1}} p \quad\text{ is an edge of $\T$}.$$

A label can be seen as $\mathrm{input}|\mathrm{output}$. A reduced path is one where the input label is a reduced word, that is, one without backtracking. Naturally, every nontrivial reduced closed path at the basepoint is labelled (on the input side) by a nontrivial reduced word. The following is   an analogue of \cite[Proposition 3.1]{[Sil13]} for partial transducers.
\begin{lemma} 
	Let
	\(
	\mathcal T=(Q,q_0,\delta,\lambda)
	\)
	be a finite partial deterministic inverse $\widetilde A$-transducer, and let
	\(
	\theta:\widetilde A^*\longrightarrow F_A
	\)
	be the natural projection. The partial mappings $\delta$ and $\lambda$ induce  partial mappings $\wt \delta$ and $\wt \lambda$ on $Q\times F_A$ with domain

	\[
	\left\{
	(q,g)\in Q\times F_A
	\;\middle|\;
	\overline g \text{ is readable from }q
	\right\}.
	\]
	Hence
	\[
	g\widetilde{\mathcal T}
	=
	(q_0,g)\widetilde\lambda
	\]
	defines a partial mapping $\widetilde{\mathcal T}:F_A\rightharpoonup F_A$.
\end{lemma}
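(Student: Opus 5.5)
The plan is to define $(q,g)\widetilde\delta=(q,\overline g)\delta$ and $(q,g)\widetilde\lambda=\overline{(q,\overline g)\lambda}$ whenever $\overline g$ is readable from $q$. Since $\overline g$ is the unique reduced representative of $g$, this is automatically well defined on the stated domain. The real content is that the definition is compatible with $\theta$. That is, for any word $w\in\widetilde A^*$ readable from $q$, we should have $(q,w)\delta=(q,\overline w)\delta$ and $\overline{(q,w)\lambda}=\overline{(q,\overline w)\lambda}$, and $\overline w$ should itself be readable from $q$. Once this holds, $\widetilde\delta,\widetilde\lambda$ are genuinely induced by $\delta,\lambda$ through $\theta$, and $g\widetilde{\mathcal T}=(q_0,g)\widetilde\lambda$ is a partial map on $F_A$ with the indicated domain.

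The key step is a single-cancellation lemma. Suppose $w=uaa^{-1}v$ is readable from $q$ and $p=(q,u)\delta$. Reading $a$ gives an edge $p\xrightarrow{a|x}p'$. By the inverse property, $p'\xrightarrow{a^{-1}|x^{-1}}p$ is an edge. Determinism then forces the path reading $a^{-1}$ from $p'$ to be exactly this edge, so it returns to $p$. Consequently $uv$ is readable from $q$ and $(q,uaa^{-1}v)\delta=(q,uv)\delta$. The outputs are $(q,u)\lambda\, x x^{-1}\,(p,v)\lambda$ versus $(q,u)\lambda\,(p,v)\lambda$, which are equal in $F_A$.

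Induction on the number of elementary cancellations from $w$ to $\overline w$ then gives the required compatibility. This shows readability of $\overline w$ whenever some representative $w$ is readable, and agreement of endpoint and reduced output. The converse direction is trivial, since $\overline g$ is itself a representative.

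The only delicate point is the one handled in the single-cancellation step. In a partial transducer, readability could in principle fail to be preserved under cancellation. Determinism together with the inverse condition is exactly what rules this out, by guaranteeing that the $a^{-1}$-edge out of $p'$ exists and returns to $p$. I would also remark that inserting $aa^{-1}$ need not preserve readability. This is why the domain is phrased through $\overline g$ rather than through arbitrary representatives: it explains the choice of domain without affecting well-definedness.
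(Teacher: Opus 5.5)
Your proposal is correct and matches the paper's proof: the same single-cancellation step, where the inverse property plus determinism force the $a^{-1}$-edge out of $p'$ to return to $p$, followed by iteration over a sequence of free cancellations. Your extra remarks, on defining $\widetilde\delta,\widetilde\lambda$ through $\overline g$ and on insertion of $aa^{-1}$ not preserving readability, are accurate and simply make explicit what the paper leaves implicit.
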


\begin{proof}
	It is enough to prove that deleting a single freely reducible
	factor does not affect readability, the terminal state, or the output
	as an element of $F_A$.
	
	Suppose that
	\[
	u=xaa^{-1}y,
	\qquad
	x,y\in\widetilde A^*,
	\qquad
	a\in\widetilde A,
	\]
	and that $u$ is readable from $q$. Put
	\(
	p=(q,x)\delta.
	\)
	Suppose that the edge read after $x$ is
	\(
	p\xrightarrow{a\mid U}r.
	\)
	Since $\mathcal T$ is inverse, the edge
	\(
	r\xrightarrow{a^{-1}\mid U^{-1}}p
	\)
	also belongs to $\mathcal T$. Moreover, since $\mathcal T$ is
	deterministic, this is the unique edge leaving $r$ with input label
	$a^{-1}$. It must therefore be the edge traversed by the factor
	$a^{-1}$ in the readable word $u$.
	
	Thus the subpath labelled $aa^{-1}$ returns from $p$ to $p$.
	Consequently, $xy$ is readable from $q$ and
	\(
	(q,u)\delta=(q,xy)\delta.
	\)
	If
	\(
	X=(q,x)\lambda\) and
	\(Y=(p,y)\lambda,
	\)
	then
	\[
	(q,u)\lambda=XUU^{-1}Y,
	\qquad
	(q,xy)\lambda=XY.
	\]
	It follows that
	\[
	\bigl((q,u)\lambda\bigr)\theta
	=
	\bigl((q,xy)\lambda\bigr)\theta.
	\]
	
	Iterating this argument over a sequence of free cancellations taking
	$u$ to $\red{u}$ gives
	\(
	(q,u)\delta=(q,\red{u})\delta
	\)
	and
	\(
	\bigl((q,u)\lambda\bigr)\theta
	=
	\bigl((q,\red{u})\lambda\bigr)\theta.
	\)
	
Hence the induced partial mappings are therefore well-defined.
\end{proof}

\subsubsection{Fixed loops and discrepancies}

Let $\T$ be a finite inverse $\wt A$-transducer. Define $$\Fix(\wt\T)=\{w\in F_A\mid w\wt\T=w\}.$$
 
We say that $w\in F_A$ is a \emph{fixed loop} (at the basepoint) if $w\in \Fix(\wt \T)$ and $(q_0,w)\delta=q_0$.

We define the \emph{discrepancy} of a word $w\in F_A$ that represents a readable path from the basepoint as $w^{-1}w\wt\T$. 

Throughout the paper, when reading a word, we will keep track of the discrepancy of the input as well as the state we are currently at and denote that by a pair $(d,q)\in F_A\times Q$, which we will call a discrepancy pair.

If, when reading a word, the current discrepancy is $d\in F_A$ and an edge labelled $a\mid U$ is crossed, the new discrepancy is
\begin{align}
	d'&=\red{a^{-1}dU}.
	\label{eq:update}
\end{align}
More generally, if we read a path $\gamma$, the discrepancy is
\begin{align}
	d'&=\red{\inlab(\gamma)^{-1}d\,\outlab(\gamma)},
	\label{eq:path-update}
\end{align}
where $\inlab(\gamma)$ is the input label of $\gamma$ and $\outlab(\gamma)$ is the output label.

Starting with discrepancy $1$, a closed path $\gamma$ is a fixed loop if and only if $(q_0,\gamma)\delta=q_0$ and its final discrepancy is $1$.

\section{Construction of a partial transducer from a cyclic tag system}\label{sec:construction}
The purpose of this section is to describe a construction of a finite partial deterministic inverse transducer associated to a cyclic tag system so that the system halts if and only if the transducer has a nontrivial fixed point. Moreover, the transducer will have the property that all fixed points will be, in fact, fixed loops at the initial state.

Fix a cyclic tag system
\(
 \mathcal C=(u_0,\ldots,u_{m-1})
\)
with initial word $w_0$.  Let
\[
 A=\{0,1,\#,H,p\},
 \qquad
 Q=\{s,0,1,\ldots,m-1\}.
\]

When we write $i$ to refer to an element of $Q$ we always mean $i$ mod $m$.  Let $\delta:Q\times   A\rightharpoonup Q$ be defined as:
\begin{itemize}
	\item $(s,\#)\delta=0$;
	\item for $i\in \{0,\ldots, m-1\}$ $(i,0)\delta=(i,1)\delta=i+1$ and $(i,H)\delta=(i,p)\delta=i$;
\end{itemize}
and with the remaining transitions undefined.

Also, let $\lambda:Q\times   A\rightharpoonup A^*$ be defined as:
\begin{itemize}
	\item $(s,\#)\lambda=\#w_0p$;
	\item for $i\in \{0,\ldots, m-1\}$ $(i,0)\lambda=H$, $(i,1)\lambda=u_iH$, $(i,H)\lambda=H$ and $(i,p)\lambda=p$;
\end{itemize}
and with the remaining transitions undefined.
	We now  add all inverse-letter transitions to obtain a partial finite inverse transducer $ \T_{\mc C}$.

\begin{figure}[H]
	\centering
	\resizebox{\linewidth}{!}{%
		\begin{tikzpicture}[
			>=Stealth,
			semithick,
			state/.style={
				circle,
				draw,
				minimum size=9mm,
				inner sep=1pt
			},
			lab/.style={
				fill=white,
				inner sep=1.5pt,
				align=center,
				font=\small
			}
			]
			
			\node[state] (s) {$s$};
			\node[state, right=2.8cm of s] (q0) {$0$};
			\node[state, right=3.0cm of q0] (q1) {$1$};
			\node[state, right=3.0cm of q1] (q2) {$2$};
			
			\node[coordinate, right=1.5cm of q2] (midL) {};
			\node[coordinate, right=1.5cm of midL] (midR) {};
			\node at ($(midL)!0.5!(midR)$) {$\cdots$};
			
			\node[state, right=1.5cm of midR] (qm) {$m-1$};
			
			\draw[->]
			(s) -- node[lab,above] {$\#\mid \#w_0p$} (q0);
			
			\draw[->]
			(q0) to[bend left=15]
			node[lab,above] {$\substack{0\mid H\\[1pt]1\mid u_0H}$}
			(q1);
			
			\draw[->]
			(q1) to[bend left=15]
			node[lab,above] {$\substack{0\mid H\\[1pt]1\mid u_1H}$}
			(q2);
			
			\draw[->]
			(q2) to[bend left=12]
			node[lab,above] {$\substack{0\mid H\\[1pt]1\mid u_2H}$}
			(midL);
			
			\draw[->]
			(midR) to[bend left=12]
			node[lab,above] {$\substack{0\mid H\\[1pt]1\mid u_{m-2}H}$}
			(qm);
			
			\draw[->]
			(qm) to[out=-105,in=-75,looseness=0.75]
			node[lab,below] {$\substack{0\mid H\\[1pt]1\mid u_{m-1}H}$}
			(q0);
			
			\foreach \v/\pos in {q0/above,q1/above,q2/above,qm/above}
			{
				\draw[->]
				(\v) edge[loop \pos,looseness=5.2]
				node[lab] {$\substack{H\mid H\\[1pt]p\mid p}$}
				(\v);
			}
			
		\end{tikzpicture}%
	}
	
	\caption{
		The partial inverse transducer $\mathcal T_{\mathcal C}$
	}

\end{figure}

It is easy to check that $\T_{\mc C}$ is indeed a finite partial inverse transducer.

Let $e_{\#}$ denote the edge
\(
s\xrightarrow{\#\mid\#w_0p}0.
\)
Starting from the discrepancy pair $(1,s)$ and reading $e_{\#}$ gives
\[
\left(\red{\#^{-1}(\#w_0p)},0\right)
=(w_0p,0).
\]
Put
$
	d_0=w_0p
$.
Suppose that the current discrepancy pair is $(aR,i)$, where $a$ is the first letter of the   word $aR$, and suppose that the edge leaving $i$ with input label $a$ has output $U$.  Formula~\eqref{eq:update} gives
\begin{equation*} 
	(aR,i)\longmapsto(RU,r),
\end{equation*}
where $r$ is the terminal state of the edge.  Thus the first letter is removed and the output is appended.

Define a homomorphism
\[
\beta:\{0,1,H,p\}^*\longrightarrow\{0,1\}^*
\]
by
\[
\beta(0)=0,
\qquad
\beta(1)=1,
\qquad
\beta(H)=\beta(p)=\eps.
\]
For a positive word $V$, let $|V|_H$ and $|V|_p$ denote the numbers of occurrences of $H$ and $p$ in $V$.

Starting from $(d_0,0)$, repeatedly read the first letter of the word in the current pair.  Every pair obtained in this way has the form $(d,i)$ with
\[
d\in\{0,1,H,p\}^+,
\qquad
i\in\{0,\ldots,m-1\}.
\]
Indeed, all outputs (and the starting word) are positive, and the word always contains exactly one occurrence of $p$.
 Put \(
(D_0,i_0)=(d_0,0).
\) If $(D_n,i_n)$ is defined, whenever we read a letter from $\{0,1\}$, we define $(D_{n+1},i_{n+1})$ to be the new pair carrying the discrepancy and the current state. Whenever we read $\#$, $H$, or $p$ we don't define a successor in this new sequence.

\begin{lemma} 
	For every $n$ for which $(D_n,i_n)$ is defined, we have that 
	\begin{equation*}
		i_n=n\bmod m,
		\qquad
		\beta(D_n)=W_n,
		\qquad
		|D_n|_H=n,
		\qquad \text{ and } \qquad
		|D_n|_p=1,
	\end{equation*}
	where $W_n$ is the queue of the cyclic tag system after $n$ steps.
	Moreover, $(D_{n+1},i_{n+1})$ is defined if and only if $W_n\neq\eps$.
\end{lemma}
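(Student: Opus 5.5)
Proof by induction on $n$. The base case is $(D_0,i_0)=(w_0p,0)$: here $i_0=0$, $\beta(w_0p)=w_0=W_0$, $|D_0|_H=0$, and $|D_0|_p=1$.

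For the inductive step, assume the four identities hold for $(D_n,i_n)$. Write $D_n$ as a positive word over $\{0,1,H,p\}$. By the update rule (first letter removed, output appended), reading a letter $H$ or $p$ at a state $i$ uses the loop $i\xrightarrow{H\mid H}i$ or $i\xrightarrow{p\mid p}i$. This moves that letter from the front to the back of the word and leaves the state unchanged. Since every loop output is a single letter equal to the input, such steps preserve $\beta$, $|\cdot|_H$, $|\cdot|_p$ and the state. They therefore do not affect any of the four invariants. The words stay positive, so no free cancellation occurs; this is the point of the claim, made just before the lemma, that all outputs are positive.

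Now suppose $W_n\neq\eps$. Then $\beta(D_n)\ne\eps$, so $D_n$ contains a letter from $\{0,1\}$. Write $D_n=V a R$ with $V\in\{H,p\}^*$ and $a\in\{0,1\}$. After $|V|$ loop steps the pair is $(aRV,i_n)$. Reading $a$ at state $i_n=n\bmod m$ uses the edge to $i_n+1$ with output $H$ (if $a=0$) or $u_{i_n}H$ (if $a=1$). Hence
\[
D_{n+1}=RVU,\qquad U=\alpha_n(a)H,\qquad i_{n+1}=(n+1)\bmod m.
\]
Then $\beta(D_{n+1})=\beta(R)\,\alpha_n(a)$. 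Since $\beta(D_n)=W_n=a\beta(R)$, this is exactly the tag-system update $W_{n+1}$, using $u_{n\bmod m}$. We get $|D_{n+1}|_H=n+1$ because exactly one new $H$ is appended and the $u_i$ contain no $H$. Also $|D_{n+1}|_p=1$, since no $p$ is added or removed.

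Conversely, suppose $W_n=\eps$. Then $D_n\in\{H,p\}^+$, and repeated reading only cycles $H$'s and $p$'s forever. No letter of $\{0,1\}$ is ever read, so $(D_{n+1},i_{n+1})$ is undefined. This proves the final equivalence.

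The main delicate point is making precise the convention that "the next pair in the sequence" means the result after skipping through a block of $H$/$p$ letters, and checking that this block is finite exactly when a binary letter exists. Once the invariants are stated for all intermediate pairs, not just the $D_n$, the rest is bookkeeping.
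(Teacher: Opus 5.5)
Your proof is correct and is essentially the paper's argument. Both use induction on $n$ and the factorization $D_n=VaR$ with $V\in\{H,p\}^*$ and $a\in\{0,1\}$. Both cycle $V$ to the back along the $H$ and $p$ loops, then check that the edge read on $a$ realizes the cyclic tag rule, adds exactly one $H$ and preserves the unique $p$. Writing the output as $\alpha_n(a)H$ just merges the paper's two cases $a=0$ and $a=1$.
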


\begin{proof}
	For $n=0$, we have $D_0=w_0p$ and $i_0=0$, so all assertions are immediate.
	
	Assume that the assertions hold for some $n$.  If $W_n=\eps$, then $\beta(D_n)=\eps$, so $D_n$ has no binary letter.  Therefore, we never read another binary letter and the successor is not defined.
	
	Suppose that $W_n\neq\eps$.  There is a unique factorization
	\begin{equation}\label{eq:Dn-factorization}
		D_n=ZaR,
	\end{equation}
	where
	\(
	Z\in\{H,p\}^*,\)
and
\(	a\in\{0,1\}.
	\)

	Read the letters of $Z$ in order.  The edges on $H$ and $p$ leave the state unchanged and output the letter that they read.  The resulting pair after reading $Z$ is thus
	\(
	(aRZ,i_n).
	\)
	
	If $a=0$, the next edge has label $0\mid H$ and gives
	\(
	(RZH,i_n+1).
	\)
	
	If $a=1$, the next edge has label $1\mid u_{i_n}H$ and gives
	\(
	(RZu_{i_n}H,i_n+1).
	\)

	Applying $\beta$ gives exactly the two cases of the cyclic tag rule.  In both cases the state increases by one, one new $H$ is added, and the unique occurrence of $p$ is preserved.
\end{proof}

\begin{proposition}\label{prop:first-letter-sequence}
	Consider the sequence of discrepancy pairs obtained from $(d_0,0)$ by repeatedly reading the first letter of the word in the current pair.
	\begin{enumerate}[label=\textup{(\roman*)}]
		\item If $\mathcal C$ does not halt, the sequence is infinite and no pair occurs twice.
		\item If $\mathcal C$ halts, the sequence reaches a pair $(D,i)$ with
		\(
		D\in\{H,p\}^+.
		\)
		Reading the letters of $D$ in order returns to the same pair $(D,i)$.
	\end{enumerate}
\end{proposition}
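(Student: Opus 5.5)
The plan is to reduce everything to the preceding lemma, which tracks the subsequence $(D_n,i_n)$ of pairs reached immediately after reading a binary letter. The full sequence of discrepancy pairs is made of these pairs together with the intermediate pairs met while reading the $\{H,p\}$-prefix $Z$ of some $D_n$. By the proof of that lemma, starting from $D_n=ZaR$ with $Z\in\{H,p\}^*$, reading the letters of $Z$ one at a time produces the pairs $(Z_2aRZ_1,i_n)$ for each factorization $Z=Z_1Z_2$. These are cyclic rotations of the prefix $Z$ past the suffix $aR$, all at state $i_n$. After that, reading $a$ gives $(D_{n+1},i_{n+1})$.

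For (i), if $\mathcal C$ does not halt then $W_n\neq\eps$ for all $n$. The lemma gives that every $(D_n,i_n)$ is defined, so the sequence is infinite. To see there are no repetitions, I will use the invariant that every pair between $(D_n,i_n)$ and $(D_{n+1},i_{n+1})$, the latter excluded, has a word that is a rearrangement of the letters of $D_n$. In particular it contains exactly $n$ occurrences of $H$, since reading $H$ or $p$ just moves the letter to the end. So the $H$-count is constant on the block belonging to index $n$ and is strictly increasing from block to block; pairs in different blocks are therefore distinct.

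Within one block, the words are $Z_2aRZ_1$ for the different splittings $Z=Z_1Z_2$. Their first binary letter occurs at position $|Z_2|+1$, and these positions are distinct for distinct splittings. Hence the words are pairwise distinct, and no pair occurs twice. This marker-counting argument is the key point and is straightforward once the block structure is made explicit.

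For (ii), let $N$ be the halting time, so $W_N=\eps$. By the lemma, $(D_N,i_N)$ is defined and $\beta(D_N)=\eps$. Since $|D_N|_p=1$, the word $D_N$ lies in $\{H,p\}^+$, and the sequence reaches the pair $(D,i)=(D_N,i_N)$.

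It remains to show that reading the letters of $D$ in order returns to $(D,i)$. Each letter $x\in\{H,p\}$ is read by the loop $i\xrightarrow{x\mid x}i$, and the update formula~\eqref{eq:update} sends $(xR,i)$ to $(Rx,i)$. Here no free cancellation occurs, because all words are positive. After $|D|$ such steps the word has been rotated completely, giving back $(D,i)$.

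The only delicate point in the whole argument is the bookkeeping in (i) showing that intermediate pairs within a single block cannot coincide. The first-binary-letter position settles this.
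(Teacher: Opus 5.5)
Your proposal is correct and follows essentially the same argument as the paper: the $H$-count separates pairs read between different consecutive binary letters, and the length of the $\{H,p\}$-prefix before the first binary letter separates pairs within one such block. Part (ii) is the same rotation argument using the loops $i\xrightarrow{x\mid x}i$ for $x\in\{H,p\}$.
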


\begin{proof}
	Suppose first that $\mathcal C$ does not halt.  Then $W_n\neq\eps$ for every $n$, so  $(D_n,i_n)$ is defined for every $n$.
	
	After the $n$-th binary letter has been read and before the next binary letter is read, every word in the sequence contains exactly $n$ occurrences of $H$. Indeed, after the $n$-th binary letter is read, the sequence has value $(D_n,i_n)$ and reading $H$ or $p$ does not change the number of $H$'s in the discrepancy component. Therefore, if a pair appears twice in the sequence, both occurrences must be between the reading of the $n$-th and the $n+1$-th binary letters, for some $i$.
  Now, write $D_n=ZaR$ as in \eqref{eq:Dn-factorization}.  Before $a$ is read, the sequence moves the letters of $Z$ from the beginning to the end, one at a time.  At each step, the number of letters from $\{H,p\}$ before the first binary letter decreases by one.  Hence no word, and therefore no pair, occurs twice in this part of the sequence.
	
	Suppose now that $\mathcal C$ halts after $n$ steps.  Then $W_n=\eps$, so $D_n\in\{H,p\}^+$.  It is nonempty because it contains the unique occurrence of $p$.  Put $D=D_n$ and $i=i_n$.  Reading an initial $H$ or $p$ moves that letter to the end and leaves the state unchanged.  Reading all letters of $D$ therefore returns to $(D,i)$.
\end{proof}

\begin{remark}
	The letters $H$ and $p$ have different roles.  Every binary transition appends one $H$ to the discrepancy, and the number of occurrences of $H$ records the number of binary steps.  The letter $p$ occurs once in the initial word and is preserved by every transition used in the simulation.  It keeps the current  discrepancy nontrivial.
\end{remark}

Define a homomorphism
\(
\chi:F_A\longrightarrow\mathbb Z
\)
by
\[
\chi(p)=1,
\qquad
\chi(a)=0
\quad(a\in A\setminus\{p\}).
\]
So $\chi$ denotes the exponent-sum of $p$. For every edge
\(
q\xrightarrow{a\mid U}r
\)
whose endpoints are numbered states, one has
\begin{equation}\label{eq:chi-preserved}
	\chi(a)=\chi(U).
\end{equation}
Consequently, if this edge changes $(d,q)$ to $(e,r)$, then
\[
\chi(e)=-\chi(a)+\chi(d)+\chi(U)=\chi(d).
\]
Since $\chi(d_0)=1$, every pair reached from $(d_0,0)$ by a path among the numbered states has a nontrivial discrepancy.

Before proving the main theorem of this section,  we present two technical lemmas.
\begin{lemma}\label{lem:wrong-edge}
	Let
	\(
	i\xrightarrow{a\mid U}j
	\)
	be an edge between numbered states, and suppose it changes $(d,i)$ to $(e,j)$, where $\chi(d)=1$.  If
	\[
	a\neq\first(d),
	\]
	then
	\[
	\first(e)=a^{-1}.
	\]
\end{lemma}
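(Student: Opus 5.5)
The plan is a direct computation of the reduced form of $e=\red{a^{-1}dU}$ (formula~\eqref{eq:update}). The only way $\first(e)$ can differ from $a^{-1}$ is through cancellation, and I would rule out each such possibility using the invariant $\chi$ and the short list of edge labels among numbered states.

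\emph{Step 1: the first product does not reduce.} Since $\chi(d)=1$, we have $d\neq 1$, so $\first(d)$ is defined. Because $a\neq\first(d)$, the word $a^{-1}\red{d}$ is already freely reduced, and its first letter is $a^{-1}$.

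\emph{Step 2: reduce to one bad configuration.} Next we multiply by $U$. Free reduction of $(a^{-1}\red d)\cdot\red U$ only cancels a suffix of $a^{-1}\red d$ against a prefix of $\red U$. Hence the first letter stays $a^{-1}$ unless the whole word $a^{-1}\red d$ is cancelled. That happens only when $\red U=VU'$ (reduced factorization) with $a^{-1}\red d=V^{-1}$. In that case $d=aV^{-1}$ in $F_A$, where $V$ is a prefix of $\red U$. So it suffices to show that this configuration cannot occur when $\chi(d)=1$ and $\first(d)\neq a$.

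\emph{Step 3: case analysis over the edges between numbered states.} These edges are $0\mid H$, $1\mid u_iH$, $H\mid H$, $p\mid p$ and their inverses.
\begin{enumerate}
\item For a positive edge with $a\neq p$: $U$ is a positive word containing no $p$, so $\chi(V)=0$ and $\chi(a)=0$. Then $\chi(d)=0$, a contradiction.
\item For $a=p$: $U=p$, so $V\in\{1,p\}$. If $V=1$, then $d=p$ and $\first(d)=a$, which is excluded. If $V=p$, then $d=1$ and $\chi(d)=0$, a contradiction.
\item For an inverse edge $a=b^{-1}$ with $U=W^{-1}$ and $W$ positive: $V^{-1}$ is a suffix of $W$ and $d=b^{-1}V^{-1}$. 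If $b\neq p$, then $\chi(d)=0$. If $b=p$, then $W=p$, so $d\in\{p^{-1},1\}$ and $\chi(d)\in\{-1,0\}$. Both contradict $\chi(d)=1$.
\end{enumerate}
In every case the total-cancellation configuration is impossible, so $\first(e)=a^{-1}$.

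The main point needing care is Step 2. One must state the cancellation correctly as happening between two reduced words, and then pass from ``full cancellation'' to the group identity $d=aV^{-1}$. Once that is done, computing $\chi$ in the group finishes each case, with the single exception $V=1$, $a=p$, which is excluded by the hypothesis $a\neq\first(d)$.
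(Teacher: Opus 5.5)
Your proof is correct and follows essentially the same route as the paper. In both arguments, $a^{-1}\overline{d}$ is reduced, so its first letter can disappear only if $U$ cancels it entirely. That is ruled out by the $p$-exponent when $a\notin\{p,p^{-1}\}$, and by $|U|=1$ (your $\chi$ check) when $a\in\{p,p^{-1}\}$; you simply make the bad configuration $d=aV^{-1}$ explicit and check it edge by edge.
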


\begin{proof}
	The word $a^{-1}d$ is freely reduced and begins with $a^{-1}$. It is easy to see that multiplication by $U$ on the right cannot cancel this first letter. Indeed, $a^{-1}d$ contains a $p$ and if $a\not\in\{p,p^{-1}\}$, then $U$ contains no $p^{-1}$ to cancel that $p$ and the initial $a^{-1}$ survives cancellation. If $a\in\{p,p^{-1}\}$, then $U=a$ and so $U$ has length one and it cannot cancel the initial $a^{-1}$ as $d$ is nontrivial.
\end{proof}

The next lemma  concerns an arbitrary closed path, not necessarily the path obtained by repeatedly reading the first letter.

\begin{lemma}\label{lem:closed-path-at-zero}
	Let $\rho$ be a nonempty reduced closed path at state $0$ that uses only the numbered states.  If reading $\rho$ from $(d_0,0)$ returns to $(d_0,0)$, then the sequence obtained from $(d_0,0)$ by repeatedly reading the first letter contains the same pair at two different times.
\end{lemma}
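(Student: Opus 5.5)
The plan is to compare $\rho$ step by step with the deterministic first-letter sequence from Proposition~\ref{prop:first-letter-sequence}, and to use the fact that the transducer is inverse in order to run the same comparison backwards from the end of $\rho$. Write $\rho=e_1\cdots e_n$ with input letters $a_1,\ldots,a_n$, so that $a_1\cdots a_n$ is reduced. Write $(d^{(t)},q_t)$ for the discrepancy pair after reading $e_1\cdots e_t$ from $(d_0,0)$, so that $(d^{(0)},q_0)=(d^{(n)},q_n)=(d_0,0)$. By \eqref{eq:chi-preserved} every $d^{(t)}$ satisfies $\chi(d^{(t)})=1$. Hence Lemma~\ref{lem:wrong-edge} applies at every step, and every $d^{(t)}$ is nontrivial. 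Call step $t$ \emph{correct} if $a_t=\first(d^{(t-1)})$ and \emph{wrong} otherwise. Let $P_0,P_1,\ldots$ be the first-letter sequence from $(d_0,0)$, with $P_0=(d_0,0)$.

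\emph{Step 1: once wrong, always wrong.} Suppose step $t$ is wrong. Lemma~\ref{lem:wrong-edge} gives $\first(d^{(t)})=a_t^{-1}$. Since $\rho$ is reduced, $a_{t+1}\neq a_t^{-1}=\first(d^{(t)})$, so step $t+1$ is also wrong. Inductively, $\rho$ splits into an initial block of $k$ correct steps followed by $n-k$ wrong steps. Because the first-letter rule is deterministic, the correct block coincides with the sequence, so $(d^{(t)},q_t)=P_t$ for $t\le k$. If $k=n$, then $P_n=P_0$ with $n\ge1$, which already gives the required repetition.

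\emph{Step 2: reverse the path.} Because $\mathcal T_{\mathcal C}$ is inverse, reading the reversed edge $q_t\xrightarrow{a_t^{-1}\mid U_t^{-1}}q_{t-1}$ takes $(d^{(t)},q_t)$ back to $(d^{(t-1)},q_{t-1})$. Moreover, if forward step $t$ is wrong, then $\first(d^{(t)})=a_t^{-1}$, so this reverse step is correct. Hence $\rho^{-1}$, read from $(d^{(n)},q_n)=(d_0,0)$, consists of $n-k$ correct steps. By determinism, these reproduce the sequence: $(d^{(n-s)},q_{n-s})=P_s$ for $s\le n-k$. In particular, the pair at time $k$ equals both $P_k$ and $P_{n-k}$. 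If $k\neq n-k$, this is a repetition in the sequence. If $k=0$, we get $P_0=P_n$ with $n\ge 1$, which is again a repetition. This uses $n\ge1$ and the fact that $\rho$ is nonempty.

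\emph{Step 3: the balanced case $n=2k$ (main obstacle).} Here Step 2 gives no repetition directly, so I would argue more carefully at the turning point $P_k$. From $P_k$, the reverse path takes the correct step with letter $a_{k+1}^{-1}$, so $\first(P_k)=a_{k+1}^{-1}$ and hence $P_{k+1}$ is obtained from $P_k$ by the edge $e_{k+1}^{-1}$. Consequently $P_{k+1}=(d^{(k-1)},q_{k-1})\cdot$\,(suitably reindexed), i.e.\ $P_{k+1}$ coincides with the pair at forward time $k-1$, which is $P_{k-1}$. That is a repetition with $k+1\neq k-1$. I expect the delicate point to be the index bookkeeping here: one has to verify that the reverse step out of time $k$ really uses the edge $e_{k+1}^{-1}$ rather than $e_k^{-1}$, and, if the indices do not line up, to use the reducedness $a_{k+1}\neq a_k^{-1}$ to force a pair to appear twice. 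I would check this first on the concrete structure $D_n=ZaR$ from the previous lemma before committing to the general argument.
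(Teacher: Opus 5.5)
Your Steps 1 and 2 are essentially the paper's argument: the paper also splits $\rho$ into an initial block of ``positive'' edges followed by ``negative'' ones, and it also compares the prefix $e_1\cdots e_k$ with the reversed suffix $e_n^{-1}\cdots e_{k+1}^{-1}$. Both are initial segments of the first-letter sequence ending at the same pair. Your direct use of $a_{t+1}\neq a_t^{-1}$ is a slightly quicker way to get the splitting than the paper's appeal to determinism.

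The gap is Step 3. The edge $e_{k+1}^{-1}$ is read from the pair at time $k+1$ and lands on the pair at time $k$. So the reversed path \emph{arrives} at $P_k$ through $e_{k+1}^{-1}$; it does not leave $P_k$ along it. What Lemma~\ref{lem:wrong-edge} gives is $\first(d^{(k+1)})=a_{k+1}^{-1}$, not $\first(d^{(k)})=a_{k+1}^{-1}$. Hence neither ``$P_{k+1}$ is obtained from $P_k$ by $e_{k+1}^{-1}$'' nor ``$P_{k+1}=P_{k-1}$'' is justified. The edge of $\rho^{-1}$ leaving $P_k$ is $e_k^{-1}$, and it is not a first-letter step. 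Indeed, $d^{(k)}$ is a positive word and $a_k^{-1}$ is a negative letter, so $a_k^{-1}\neq\first(d^{(k)})$. So the index bookkeeping you were worried about genuinely fails.

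The missing idea is that the balanced case $n=2k$ (with $k\ge 1$) produces no repetition: it simply cannot occur. In that case $e_1\cdots e_k$ and $e_n^{-1}\cdots e_{k+1}^{-1}$ are both the first $k$ edges of the same deterministic first-letter sequence from $P_0$. They therefore coincide edge by edge. Comparing the $k$-th edges gives $e_k=e_{k+1}^{-1}$, that is, $a_{k+1}=a_k^{-1}$, which contradicts the reducedness of $\rho$. With this in place of Step 3, your proof is complete and matches the paper's.
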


\begin{proof}
	Write
	\(
	\rho=e_1\cdots e_k,
	\)
	and let
	\[
	(d_0,0),(d_1,i_1),\ldots,(d_{k-1},i_{k-1}),	(d_0,0)
	\]
	be the successive pairs obtained while reading $\rho$ from $(d_0,0)$.  
	By \eqref{eq:chi-preserved},
	\[
	\chi(d_j)=1
	\qquad(0\leq j\leq k).
	\]
	In particular, every $d_j$ is nontrivial.
	
	For $1\leq j\leq k$, let $a_j$ be the input label of $e_j$.  We say that the edge $e_j$ is positive if
		\[
	a_j=\first(d_{j-1}),
	\]
	and negative otherwise.  If $e_j$ is negative, Lemma~\ref{lem:wrong-edge} gives
	\[
	\first(d_j)=a_j^{-1}.
	\]
	Thus the reverse edge $e_j^{-1}$ reads the first letter from $(d_j,i_j)$.
	
	A negative edge cannot be followed by a positive one.  Indeed, if $e_j$ is negative and $e_{j+1}$ is positive, then  both $e_j^{-1}$ and $e_{j+1}$ have as input $\first(d_j)$.  Determinism would give
	\(
	e_{j+1}=e_j^{-1},
	\)
	which contradicts the fact that $\rho$ is reduced.
	
	It follows that, for some $r\in\{0,\ldots,k\}$, the first $r$ edges are positive and the remaining are negative.
	
	If $r=k$, then $\rho$ is an initial part of the sequence obtained by repeatedly reading the first letter and returns to $(d_0,0)$.  If $r=0$, then $\rho^{-1}$ has this property.  In either case, the sequence repeats $(d_0,0)$.

	Assume that $0<r<k$.  The path
	\(
	e_1\cdots e_r
	\)
	is an initial part of the  sequence obtained by repeatedly reading the first letter from $(d_0,0)$ to $(d_r,i_r)$.  The path
	\(
	e_k^{-1}\cdots e_{r+1}^{-1}
	\)
	is another initial part of the same sequence with the same endpoints.  If these paths have different lengths, the sequence reaches $(d_r,i_r)$ at two different times.  If they have the same length, determinism implies that they are equal edge by edge.  Their last edges are then equal, so
	\(
	e_r=e_{r+1}^{-1},
	\)
	again contradicting the reducedness of $\rho$.  Thus the   sequence repeats a pair.
\end{proof}

\begin{theorem}\label{thm:partial trans}
	From a cyclic tag system $\mathcal C$, one can effectively construct a finite partial deterministic inverse transducer $\mathcal T_{\mathcal C}$ such that the following conditions are equivalent:
	\begin{enumerate}[label=\textup{(\roman*)}]
		\item $\mathcal C$ halts;
		\item $\mathcal T_{\mathcal C}$ has a nontrivial fixed loop at $s$;
		\item $\Fix(\widetilde{\mathcal T}_{\mathcal C})\neq\{1\}$.
	\end{enumerate}
	Moreover, if $g\in\Fix(\widetilde{\mathcal T}_{\mathcal C})$, then the freely reduced representative of $g$ labels a closed path at $s$.
\end{theorem}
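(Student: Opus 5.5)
We take $\mathcal T_{\mathcal C}$ to be the transducer constructed above. It is clearly effective: only the words $u_i$ and $w_0$ enter the labels. We prove (i)$\Rightarrow$(ii)$\Rightarrow$(iii) directly. We then prove the ``moreover'' assertion, and finally (iii)$\Rightarrow$(i) by combining the previous two steps with Lemma~\ref{lem:closed-path-at-zero} and Proposition~\ref{prop:first-letter-sequence}.

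\emph{(i)$\Rightarrow$(ii).} If $\mathcal C$ halts, Proposition~\ref{prop:first-letter-sequence}(ii) gives a reduced path $\gamma$ from $(d_0,0)$ to a pair $(D,i)$ with $D\in\{H,p\}^+$, obtained by reading first letters. It also gives a nonempty loop $\sigma$ at $i$, labelled $D$, returning to $(D,i)$. Consider the closed path $e_\#\gamma\sigma\gamma^{-1}e_\#^{-1}$ at $s$. Using \eqref{eq:path-update}, its discrepancy evolves as $1\mapsto d_0\mapsto D\mapsto D\mapsto d_0\mapsto 1$. Here reading $\gamma^{-1}$ from $(D,i)$ reverses the updates of $\gamma$, since each inverse edge inverts the update $d\mapsto a^{-1}dU$. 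So the path is a fixed loop. Its input $\#w\,D\,w^{-1}\#^{-1}$, with $w=\inlab(\gamma)$, is nontrivial in $F_A$ because $D\neq1$ is positive. \emph{(ii)$\Rightarrow$(iii)} is trivial.

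\emph{Moreover part.} Let $g\neq1$ be fixed and let $\red g$ be readable from $s$. The only edges at $s$ are $e_\#$ and its inverse. So $\red g$ begins with $\#$: if it began with $\#^{-1}$, there would be no path at all, because $s$ has no incoming $\#$-edge and hence no $\#^{-1}$-edge out of $s$. After $e_\#$ the path stays in the numbered states until it possibly returns to $s$ via $\#^{-1}$. If it returns and then continues, it must read $\#$ again, which is backtracking and contradicts reducedness. Hence either $\red g=\#x\#^{-1}$ is closed at $s$, or $\red g=\#x$ ends at a numbered state. In the second case the final discrepancy is obtained from $(d_0,0)$ by numbered edges. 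By \eqref{eq:chi-preserved} it has $\chi=1$, so it is nontrivial, contradicting $g\in\Fix$. This proves that fixed points are closed at $s$.

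\emph{(iii)$\Rightarrow$(i).} A nontrivial fixed point is then $\red g=\#x\#^{-1}$ with $x$ a reduced closed path $\rho$ at $0$ in the numbered states. Since $\red g\neq1$ and the word is reduced, $\rho$ is nonempty. Its discrepancy after $e_\#$ is $d_0$. Before the final $\#^{-1}$ it must again be $d_0$, because the last edge maps $d\mapsto \#\,d\,w_0^{-1}... $ precisely, $d\mapsto \red{\# d p^{-1}w_0^{-1}\#^{-1}}$, which equals $1$ iff $d=w_0p=d_0$. So $\rho$ returns $(d_0,0)$ to itself. Lemma~\ref{lem:closed-path-at-zero} then says the first-letter sequence repeats a pair. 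By Proposition~\ref{prop:first-letter-sequence}(i), $\mathcal C$ must halt.

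The main obstacle is the bookkeeping in the moreover part: one has to exclude paths that wander back through $s$, and check carefully that the last edge forces discrepancy $d_0$. Everything else follows from the earlier lemmas.
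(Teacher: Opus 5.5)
Your proposal is correct and follows essentially the same route as the paper. It uses the same fixed loop $e_{\#}\gamma\sigma\gamma^{-1}e_{\#}^{-1}$ for (i)$\Rightarrow$(ii), the same $\chi$-argument showing that every fixed point is a closed path at $s$, and the same reduction of the converse to Lemma~\ref{lem:closed-path-at-zero} and Proposition~\ref{prop:first-letter-sequence}(i), via the fact that the final edge $e_{\#}^{-1}$ forces the discrepancy before it to be $d_0$; the only thing to fix is the garbled fragment in your formula for that last edge, which should read simply $d\mapsto\red{\#\,d\,d_0^{-1}\#^{-1}}$.
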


\begin{proof}
	We first prove the last assertion.  Let $g$ be fixed, and let $w$ be its freely reduced representative.  Since $w$ is freely reduced, its path  is reduced.  If $w$ is empty, it is already a closed path at $s$. 
	Suppose that $w$ is nonempty.  The only edge leaving $s$ is $e_{\#}$, so the path begins with that edge and the discrepancy pair is $(d_0,0)$.  If the path ended at a numbered state, its final first coordinate would have $p$-exponent sum one by \eqref{eq:chi-preserved}.  This is impossible because $g$ is fixed, so it has trivial discrepancy.
	Therefore the path returns to $s$ through $e_{\#}^{-1}$.  It cannot leave $s$ again, because the only possible next edge would be $e_{\#}$, giving an immediate reversal.  Hence the path ends at $s$.
	
	It follows immediately that conditions \textup{(ii)} and \textup{(iii)} are equivalent: a fixed loop is a fixed point, and every fixed point is represented by a loop at $s$.
	
	Suppose that $\mathcal C$ halts.  By Proposition~\ref{prop:first-letter-sequence}, the  sequence obtained by repeatedly reading the first letter starting from $(d_0,0)$ reaches a pair $(D,i)$ with $D\in\{H,p\}^+$, and reading the letters of $D$ returns to $(D,i)$.  Let $\tau$ be the path from $(d_0,0)$ to $(D,i)$, and let $\sigma$ be the closed path at state $i$ whose input (and output) word is $D$.  
	Therefore
	\[
	\begin{aligned}
		(1,s)
		\xrightarrow{e_{\#}}(d_0,0)
		\xrightarrow{\tau}(D,i)
		\xrightarrow{\sigma}(D,i)
		\xrightarrow{\tau^{-1}}(d_0,0)
		\xrightarrow{e_{\#}^{-1}}(1,s).
	\end{aligned}
	\]
	is a closed path at $s$ which sends $(1,s)$ to itself.
 It is therefore a fixed loop.  Its input label represents a conjugate of the nonempty positive word $D$, and is nontrivial.  This proves \textup{(i)}$\Rightarrow$\textup{(ii)}.
	
	Conversely, suppose that $\mathcal C$ does not halt and that there is a nontrivial fixed reduced loop at $s$.       Since $s$ is incident with only the geometric edge $e_{\#}$, the reduced loop has the form
	\(
	e_{\#}\rho e_{\#}^{-1},
	\)
	where $\rho$ is a nonempty reduced closed path at state $0$ using only the numbered states.
	
	Suppose that reading $\rho$ from $(d_0,0)$ gives $(e,0)$.  The inverse entry edge has label
	\[
	\#^{-1}\mid d_0^{-1}\#^{-1}.
	\]
	Since the full path is fixed,  \eqref{eq:update} gives
	\[
	1=\red{\# e d_0^{-1}\#^{-1}},
	\]
	and therefore $e=d_0$.  Thus $\rho$ sends $(d_0,0)$ to itself.  Lemma~\ref{lem:closed-path-at-zero} implies that the first-letter sequence repeats a pair, and so $\mc C$ halts by Proposition~\ref{prop:first-letter-sequence}(i), which is absurd.  
\end{proof}

Since the halting problem is undecidable for cyclic tag systems, we have the following corollary.
\begin{corollary} 
	It is undecidable whether a finite partial deterministic inverse transducer has a nontrivial fixed point.   
\end{corollary}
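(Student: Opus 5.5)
This is a direct many-one reduction from the halting problem for cyclic tag systems, using Theorem~\ref{thm:partial trans}.

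Suppose, for contradiction, that some algorithm decides whether a finite partial deterministic inverse transducer $\mathcal T$ satisfies $\Fix(\widetilde{\mathcal T})\neq\{1\}$. Given a cyclic tag system $\mathcal C=(u_0,\ldots,u_{m-1})$ with initial word $w_0$, we build $\mathcal T_{\mathcal C}$ as in Section~\ref{sec:construction}. The first point to check is that this construction is effective. The state set $\{s,0,\ldots,m-1\}$, the alphabet $A=\{0,1,\#,H,p\}$ and the transitions are written down explicitly from the finite data $(u_i)$ and $w_0$. Adding the inverse edges is a finite operation. The resulting transducer is deterministic and inverse, as noted after its definition.

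Next we run the hypothetical algorithm on $\mathcal T_{\mathcal C}$. By the equivalence (i)$\Leftrightarrow$(iii) of Theorem~\ref{thm:partial trans}, it answers ``nontrivial fixed point'' exactly when $\mathcal C$ halts. This would decide the halting problem for cyclic tag systems.

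That problem is undecidable by Cook's theorem \cite{[Coo04]}: cyclic tag systems emulate tag systems, which are Turing-complete. This gives the required contradiction.

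There is no real obstacle here, because all the content lies in Theorem~\ref{thm:partial trans}. The only point to state explicitly is that ``has a nontrivial fixed point'' means $\Fix(\widetilde{\mathcal T})\neq\{1\}$. This is condition (iii) rather than the loop condition (ii), although the two are equivalent for $\mathcal T_{\mathcal C}$ anyway.
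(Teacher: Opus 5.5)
Your proposal is correct and is exactly the paper's argument. The paper obtains the corollary in one line by combining the effective construction $\mathcal C\mapsto\mathcal T_{\mathcal C}$ and the equivalence (i)$\Leftrightarrow$(iii) of Theorem~\ref{thm:partial trans} with the undecidability of halting for cyclic tag systems, so a decider for $\Fix(\widetilde{\mathcal T})\neq\{1\}$ would decide that halting problem.
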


\section{The Post Correspondence Problem for free groups}

The purpose of this section is to show that the problem of deciding the existence of a nontrivial fixed loop in a finite partial deterministic inverse transducer can be reduced to an instance of the PCP for free groups. This, together with the main result from the previous section, leads to the undecidability of the PCP for free groups.

\begin{theorem} \label{thm:transducer-to-equalizer}
	From a finite partial inverse $\wt A$-transducer $\mathcal T=(Q,q_0,\delta, \lambda)$, one can effectively construct a finite set $Y$, and homomorphisms
	\[
	g,h:F_Y\longrightarrow F_A
	\]
	between two free groups
	such that $h$ is injective and
	\[
	\Eq(g,h)\neq\{1\}
	\quad\Longleftrightarrow\quad
	\mathcal T\text{ has a nontrivial fixed loop at }q_0.
	\]
\end{theorem}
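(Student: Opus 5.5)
We follow the outline from the introduction. First we build the subgroup of closed input paths. Forget the outputs of $\mathcal T$ and keep the input automaton: states $Q$, and an edge $p\xrightarrow{a}r$ for each transition. Because $\mathcal T$ is deterministic and inverse, this is a finite inverse (folded) automaton. Let
\[
K=\{w\in F_A\mid \red w \text{ labels a closed path at } q_0\}.
\]
By Lemma 2.1 (the well-definedness lemma), reducing a word does not change readability or the terminal state, so $K$ is a subgroup of $F_A$. It is the subgroup recognized by the Stallings automaton obtained from the connected component of $q_0$, after pruning hanging trees if desired. Choose a spanning tree $T$ of this component and write $t_q$ for the tree geodesic from $q_0$ to $q$. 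The standard Stallings theory then gives a free basis $B=\{b_1,\dots,b_r\}$. There is one basis element for each positive geometric edge $e\colon q\xrightarrow{a}q'$ outside $T$, namely $b_e=\inlab(t_q\,e\,t_{q'}^{-1})$. All of this is effective.

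Next we define the maps. Put $Y=\{y_1,\dots,y_r\}$, let $y_ih=b_i$, and let $y_ig=(q_0,b_i)\widetilde\lambda$, the output along the closed path labelled $b_i$. Then $h$ is injective because it maps $Y$ bijectively onto a free basis of $K$, and $g$ is a homomorphism by definition. The key identity we need is
\[
xg=(q_0,xh)\widetilde\lambda \quad\text{for all } x\in F_Y.
\]
To prove it, write $x$ as a word $y_{i_1}^{\epsilon_1}\cdots y_{i_k}^{\epsilon_k}$. The word $b_{i_1}^{\epsilon_1}\cdots b_{i_k}^{\epsilon_k}$ reads a concatenation of closed paths at $q_0$. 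Each $b_i^{-1}$ traverses the reverse path, and its output is $(y_ig)^{-1}$ by the inverse property. The output of a concatenation is the product of the outputs. Lemma 2.1 shows the output as an element of $F_A$ depends only on the reduced input, that is, on $xh$, and this gives the identity.

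For the equivalence, suppose first that $1\neq x\in\Eq(g,h)$. Then $w=xh\neq1$ because $h$ is injective. Also $w\in K$, so $(q_0,w)\delta=q_0$, and by the identity $w\widetilde{\mathcal T}=xg=xh=w$. Thus $w$ is a nontrivial fixed loop. Conversely, suppose $w$ is a nontrivial fixed loop. Its reduced form labels a closed path at $q_0$, so $w\in K$, and we can write $w=xh$ with $x\neq1$. Then $xg=w\widetilde{\mathcal T}=w=xh$.

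The only point needing care is the identity above, specifically that the output map respects group multiplication on $K$ after free reduction. This is exactly what Lemma 2.1, together with the inverse property of $\mathcal T$, provides, so we do not expect a real obstacle.
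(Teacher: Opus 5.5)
Your proposal is correct and follows essentially the same route as the paper. You take the subgroup $K$ of closed input paths at $q_0$ with a free basis computed from its Stallings automaton, send $Y$ onto that basis via $h$, let $g$ record the outputs along the basis loops, and use the identity $xg=(xh)\widetilde{\mathcal T}$, which the paper phrases as $\widetilde{\mathcal T}|_K$ being a homomorphism. Your spanning-tree description of the basis and your word-level check of that identity are simply more detailed versions of the paper's steps.
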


 \begin{proof}
 	Forget the output labels of $\mathcal T$ and consider only its
 	input-labelled automaton.  Let
 	\[
 	K=
 	\{w\in F_A\mid (q_0,w)\widetilde\delta=q_0\}
 	\]
 	be the set of elements of $F_A$ which label closed paths
 	at $q_0$.
 	
 	The set $K$ is a subgroup of $F_A$.  Indeed, the product of two
 	closed paths is a closed path, and the inverse of a closed path is
 	again a closed path.
 	
 	Consider the connected component of the input automaton containing $q_0$. Since it is   finite, deterministic, and inverse, it
 	is a finite Stallings automaton for $K$, after removing any edges
 	which don't occur in any reduced closed path at $q_0$.

 	 Therefore one can
 	effectively compute a finite free basis $B$
 	of $K$. Let
 	\(
 	B=\{b_1,\ldots,b_r\}
 	\)
 	be such a basis, and let
 	\[
 	Y=\{y_1,\ldots,y_r\}
 	\]
 	be a new alphabet.

 	Define $\psi=\wt \T|_K$. That is, 
 	\(
 	\psi:K\longrightarrow F_A
 	\)
 	is defined by
 	\(
 	w\psi=w\widetilde{\mathcal T}.
 	\)
 	This is a homomorphism.  Indeed, if $u,v\in K$, then reading $u$
 	returns the transducer to $q_0$.  Hence
 	\[
 	(uv)\psi
 	=
 	(uv)\widetilde{\mathcal T}
 	=
 	(u\widetilde{\mathcal T})
 	(v\widetilde{\mathcal T})
 	=
 	(u\psi)(v\psi).
 	\]
 	
 	Let $F_Y$ be the free group with basis $Y$.  Define homomorphisms
 	\(
 	g,h:F_Y\longrightarrow F_A
 	\)
 	 by
 	\[
 	y_ih=b_i
 	\qquad\text{and}\qquad
 	y_ig=b_i\psi \qquad (1\leq i\leq r).
 	\]
 	
 	Since $B$ is a free basis of $K$, the homomorphism $h$ is injective
 	and
 	\(
 	F_Yh=K.
 	\)
 	Moreover, by construction,
 	\begin{equation}\label{eq:g-psi-h}
 		xg=(xh)\psi
 	\end{equation}
 	for every $x\in F_Y$.
 	
 	Suppose first that there is some nontrivial $x$ in 
 $\Eq(g,h).$
 	Put
 	\(
 	w=xh.
 	\)
 	Since $h$ is injective, $w\neq1$ and, since $w\in K$,  $w$ labels a
 	closed path at $q_0$.  By \eqref{eq:g-psi-h},
 	\[
 	w\widetilde{\mathcal T}
 	=
 	w\psi
 	=
 	xg
 	=
 	xh
 	=
 	w.
 	\]
 	Thus $w$ is a nontrivial fixed loop at $q_0$.
 	
 	Conversely, suppose that $w$ is a nontrivial fixed loop at $q_0$.
 	Then $w\in K$ and
 	\(
 	w\psi=w.
 	\)
 	Since $h$ maps $F_Y$ isomorphically onto $K$, there exists
 	$x\in F_Y$ such that
 	\(
 	xh=w.
 	\)
 	The element $x$ is nontrivial because $w\neq1$ and $h$ is
 	injective.  Using \eqref{eq:g-psi-h}, we obtain
 	\(
 	xg
 	=
 	(xh)\psi
 	=
 	w\psi
 	=
 	w
 	=
 	xh.
 	\)
 	Hence
 	\[
 	1\neq x\in\Eq(g,h).
 	\]
 	
 	Finally, the construction is effective.  From the finite input
 	automaton one can compute a free basis $B$ of $K$.  For each
 	$b_i\in B$, one can read the corresponding closed path from $q_0$
 	and compute its output label.  These words are precisely the images
 	of the generators under $h$ and $g$.
 \end{proof}
 
 \begin{corollary}\label{pcp undec}
  The PCP is undecidable for free groups.
 \end{corollary}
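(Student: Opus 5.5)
The plan is a reduction chain from the halting problem for cyclic tag systems, which is undecidable by Cook's result \cite{[Coo04]}. Suppose, towards a contradiction, that some algorithm decides, given homomorphisms $g,h\colon F_Y\to F_A$ between finitely generated free groups, whether $\Eq(g,h)\neq\{1\}$. I will turn this into an algorithm that decides halting for an arbitrary cyclic tag system $\mathcal C$.

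First, given $\mathcal C$, I build the finite partial deterministic inverse transducer $\mathcal T_{\mathcal C}$ of Section~\ref{sec:construction}. This is effective, since the states, transitions and outputs are written down directly from $(u_0,\ldots,u_{m-1})$ and $w_0$. By Theorem~\ref{thm:partial trans}, $\mathcal C$ halts if and only if $\mathcal T_{\mathcal C}$ has a nontrivial fixed loop at its initial state $s$.

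Second, I apply Theorem~\ref{thm:transducer-to-equalizer} to $\mathcal T_{\mathcal C}$ with $q_0=s$. This effectively produces $Y$ and homomorphisms $g,h\colon F_Y\to F_A$ with $h$ injective, such that $\Eq(g,h)\neq\{1\}$ if and only if $\mathcal T_{\mathcal C}$ has a nontrivial fixed loop at $s$. The one point to check is that the initial state of $\mathcal T_{\mathcal C}$ is indeed $s$, so the two notions of ``fixed loop at the basepoint'' agree. Composing the two equivalences, $\mathcal C$ halts if and only if $\Eq(g,h)\neq\{1\}$.

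Running the hypothetical PCP algorithm on $(g,h)$ would therefore decide halting of $\mathcal C$, a contradiction. Every step of the composite reduction is effective: building the transducer, computing a Stallings basis of $K$, and computing the outputs of the basis loops. I expect no real obstacle, since all the substantive work was done in the two theorems. The only care needed is to state that the reduction is uniform in $\mathcal C$, and that undecidability holds even for instances in which $h$ is injective.
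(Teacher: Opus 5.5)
Your proposal is correct and is exactly the paper's argument. The corollary comes from composing the effective construction of $\mathcal T_{\mathcal C}$, the equivalence of Theorem~\ref{thm:partial trans} ($\mathcal C$ halts iff there is a nontrivial fixed loop at $s$), and the reduction of Theorem~\ref{thm:transducer-to-equalizer} (a nontrivial fixed loop at the initial state exists iff $\Eq(g,h)\neq\{1\}$, with $h$ injective), and then invoking the undecidability of halting for cyclic tag systems; your point that the basepoint must be taken to be $s$ is consistent with how the paper uses the construction.
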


\section{Some consequences of the undecidability of PCP}
In this section, we derive several consequences of the undecidability of the Post Correspondence Problem. We start with immediate corollaries, and then focus on the study of fixed points of virtual endomorphisms and algorithmic problems in certain matrix groups.

\begin{corollary} 
	There is no algorithm which, given homomorphisms
	\(
	g,h:F_\Sigma\to F_\Delta
	\)
	with at least one of \(g,h\) injective, outputs a basis for
	\(
	\Eq(g,h).
	\)
	
\end{corollary}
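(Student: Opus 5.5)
We argue by contradiction and reduce to Corollary~\ref{pcp undec}, in the strengthened form given by Theorem~\ref{thm:transducer-to-equalizer}. Suppose there were an algorithm which, given homomorphisms $g,h\colon F_\Sigma\to F_\Delta$ with at least one of them injective, outputs a basis of $\Eq(g,h)$. A basis is a finite list of elements. By the theorem of Goldstein and Turner \cite{[GT86]}, $\Eq(g,h)$ is finitely generated whenever one map is injective, so the required output is a well-defined finite object. Moreover, $\Eq(g,h)$ is a subgroup of a free group, hence free, so it has a free basis. It is trivial exactly when this basis is empty; if one only asks for some generating set, it is trivial exactly when every listed element freely reduces to $1$. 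Either way, triviality can be read off from the output in finitely many steps.

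Given this hypothetical algorithm, I would decide the existence of nontrivial fixed points of finite partial deterministic inverse transducers as follows. Take a cyclic tag system $\mathcal C$ and build $\mathcal T_{\mathcal C}$ using Theorem~\ref{thm:partial trans}. Apply the effective construction of Theorem~\ref{thm:transducer-to-equalizer} to obtain $g,h\colon F_Y\to F_A$ with $h$ injective, such that $\Eq(g,h)\neq\{1\}$ if and only if $\mathcal T_{\mathcal C}$ has a nontrivial fixed loop at $s$. By Theorem~\ref{thm:partial trans}, that in turn holds if and only if $\mathcal C$ halts. Since $h$ is injective, the pair $(g,h)$ is a legal input. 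Run the basis algorithm and test whether the output basis is empty. This would decide the halting problem for cyclic tag systems, contradicting Cook's theorem \cite{[Coo04]}.

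There is essentially no obstacle here. The only points to check are that the reduction of Theorem~\ref{thm:transducer-to-equalizer} produces an injective map, which it does by construction, and that triviality of a finitely generated subgroup is decidable from any finite generating set, which holds by free reduction. The same argument shows that the rank of $\Eq(g,h)$ is not computable either, since the rank is $0$ exactly when the equalizer is trivial.
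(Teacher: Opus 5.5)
Your argument is correct and is the paper's own reasoning. The paper states this corollary without a separate proof, as an immediate consequence of the undecidability of triviality of $\Eq(g,h)$ for instances with $h$ injective: a computed basis would be empty exactly when the equalizer is trivial.
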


\begin{corollary} 
	There is no algorithm which, given homomorphisms
	\(
	g,h:F_\Sigma\to F_\Delta
	\)
	with at least one of \(g,h\) injective, outputs the rank of
	\(
	\Eq(g,h).
	\)
\end{corollary}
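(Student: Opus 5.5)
The rank of \(\Eq(g,h)\) is zero exactly when \(\Eq(g,h)=\{1\}\). So an algorithm computing the rank would also decide triviality of the equalizer, and we reduce from Corollary~\ref{pcp undec}, in the strengthened form coming from Theorem~\ref{thm:transducer-to-equalizer}.

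Suppose an algorithm \(\mathcal R\) existed which, given \(g,h\colon F_\Sigma\to F_\Delta\) with at least one of them injective, returns \(\operatorname{rank}\Eq(g,h)\). The target is \(F_\Delta\), and \(\Eq(g,h)\) is a subgroup of \(F_\Sigma\), so it is free. By Goldstein--Turner \cite{[GT86]} it is finitely generated, since one map is injective. Hence its rank is a well-defined natural number.

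Now take an arbitrary finite partial deterministic inverse transducer \(\mathcal T\), for instance \(\mathcal T_{\mathcal C}\) built from a cyclic tag system \(\mathcal C\) as in Theorem~\ref{thm:partial trans}. Apply the effective construction of Theorem~\ref{thm:transducer-to-equalizer} to obtain \(Y\) and \(g,h\colon F_Y\to F_A\) with \(h\) injective. Run \(\mathcal R\) on \((g,h)\), which is allowed because \(h\) is injective. For a free group,
\[
\operatorname{rank}\Eq(g,h)=0 \iff \Eq(g,h)=\{1\},
\]
and by Theorem~\ref{thm:transducer-to-equalizer} this holds iff \(\mathcal T\) has no nontrivial fixed loop at \(q_0\). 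For \(\mathcal T=\mathcal T_{\mathcal C}\), Theorem~\ref{thm:partial trans} says this holds iff \(\mathcal C\) does not halt. We would therefore decide the halting problem for cyclic tag systems, contradicting \cite{[Coo04]}.

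The only step needing care is that the reduction stays inside the promised class of inputs, namely that one homomorphism is injective; Theorem~\ref{thm:transducer-to-equalizer} guarantees this. No substantial obstacle remains. The previous corollary on bases follows the same way, since a basis determines the rank by counting its elements.
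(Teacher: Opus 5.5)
Your proposal is correct and follows the paper's own reasoning. The rank is zero exactly when the equalizer is trivial, so an algorithm computing the rank would decide the PCP instances obtained by applying Theorem~\ref{thm:transducer-to-equalizer} (with $h$ injective) to $\mathcal T_{\mathcal C}$, and hence would decide halting of cyclic tag systems.
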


Since every free group embeds in $F_2$ (and in any free group of rank at least 2), we observe that the PCP is  still undecidable when we restrict ourselves to homomorphisms with $F_2$ as the codomain or to endomorphisms.

\begin{corollary} \label{cor:target-F2}
	The Post Correspondence Problem is undecidable for pairs
	\[
	g,h:F_Y\longrightarrow F_2,
	\]
	even when $h$ is injective.
\end{corollary}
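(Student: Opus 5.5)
The plan is to reduce from Corollary~\ref{pcp undec} by composing both homomorphisms with a fixed, effectively given embedding of the target into $F_2$. Suppose we are given an instance $g,h\colon F_Y\to F_A$ produced by Theorem~\ref{thm:transducer-to-equalizer} from the transducer $\mathcal T_{\mathcal C}$ of Theorem~\ref{thm:partial trans}, with $h$ injective. Here $A=\{0,1,\#,H,p\}$ has a fixed size $5$, independent of $\mathcal C$. Write $F_2=F(x,y)$. I will choose an explicit injective homomorphism $\iota\colon F_A\to F_2$, for instance by sending the $k$-th letter of $A$ to $x^{k}yx^{-k}$, $k=1,\dots,5$. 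These elements generate a free factor-free subgroup of rank $5$: they are a free basis of their span because they are distinct edges leaving a tree in the Stallings graph (a path of $x$-edges with a $y$-loop attached at each vertex $1,\dots,5$). Alternatively one can take any standard free basis of a finite-index subgroup and restrict.

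Next I set $g'=g\iota$ and $h'=h\iota$. Then $h'$ is injective, being a composition of injective maps, and everything is computable from $g$ and $h$. The key step is the equality $\Eq(g',h')=\Eq(g,h)$. The containment $\Eq(g,h)\subseteq\Eq(g',h')$ is trivial. For the reverse, $xg\iota=xh\iota$ implies $xg=xh$ by the injectivity of $\iota$. Hence $\Eq(g',h')\neq\{1\}$ if and only if $\Eq(g,h)\neq\{1\}$, which by Theorem~\ref{thm:transducer-to-equalizer} and Theorem~\ref{thm:partial trans} holds if and only if $\mathcal C$ halts.

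Since the halting problem for cyclic tag systems is undecidable and the map $\mathcal C\mapsto(g',h')$ is computable, no algorithm decides nontriviality of equalizers of pairs $F_Y\to F_2$ with the second map injective. The only point requiring care, and hence the main obstacle (a mild one), is verifying that $\iota$ is injective. I would justify this via Stallings folding: the graph consisting of the $x$-path $0\to5$ with $y$-loops at vertices $1,\dots,5$ is already folded. Its fundamental group at $0$ therefore has rank $5$, with basis exactly the given words, so $\iota$ maps a basis to a basis of its image.
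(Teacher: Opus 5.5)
Your proposal is correct and follows the paper's proof: compose both maps with an injective homomorphism $\iota\colon F_A\to F_2$, then use injectivity of $\iota$ to get $\Eq(g\iota,h\iota)=\Eq(g,h)$ with $h\iota$ still injective. Your explicit embedding $x^{k}yx^{-k}$, $k=1,\dots,5$, justified by the folded Stallings graph, simply makes concrete the embedding that the paper takes for granted.
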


\begin{proof}
	Let $g,h:F_Y\to F_A$ be two homomorphisms between free groups, with $h$ injective. Let
	\(
	\iota:F_A\longrightarrow F_2
	\)
	be  an injective homomorphism.  Define
	\[
	g'= g\iota,
	\qquad
	h'= h\iota.
	\]
	Injectivity of $\iota$ gives
	\[
	\Eq(g',h')=\Eq(g,h),
	\]
	and $h'$ is injective.   
\end{proof}

\begin{corollary} 
	There is no algorithm which, given two endomorphisms
	\[
	\alpha,\beta:F_Y\longrightarrow F_Y
	\]
	of a finitely generated free group, with $\beta$ injective, decides whether $\Eq(\alpha,\beta)$ is nontrivial.   
\end{corollary}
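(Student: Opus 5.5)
We reduce from Corollary~\ref{cor:target-F2} (or directly from Corollary~\ref{pcp undec}). Start from an instance \(g,h\colon F_Y\to F_A\) with \(h\) injective, as produced by Theorem~\ref{thm:transducer-to-equalizer}. The domain and codomain are generally different free groups, so we first embed both into one common free group. The convenient choice is \(F_Z=F_Y * F_A\), the free group on the disjoint union \(Z=Y\sqcup A\).

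We then define endomorphisms \(\alpha,\beta\colon F_Z\to F_Z\) on the basis \(Z\). For each \(y\in Y\), put \(y\alpha=yg\) and \(y\beta=yh\), regarded as words in \(A\subseteq Z\). To make \(\beta\) injective, we must also treat the letters of \(A\), and these must not create spurious equalizer elements.

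A first attempt is to send each \(a\in A\) to the same element under both maps. However, if \(a\alpha=a\beta\), then \(a\) lies in the equalizer, and it would be nontrivial regardless of the instance. The letters of \(A\) must therefore be mapped so that they contribute nothing to \(\Eq(\alpha,\beta)\), while \(\beta\) remains injective. We make the following choice. Let \(\beta\) act on \(F_A\) as the identity, and let \(\alpha\) send every \(a\in A\) to \(1\). Then \(\beta\) restricted to \(F_Y\) is \(h\), an embedding into \(F_A\), and \(\beta\) restricted to \(F_A\) is the identity.

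This \(\beta\) is not injective on the free product. For example, \(y\) and \(yh\) have the same image. So instead we use two disjoint copies of \(A\): let \(Z=Y\sqcup A\sqcup A'\). Define \(y\beta=yh\) (written in \(A\)) and \(a\beta=a'\), \(a'\beta=a\), and extend this in the same way to the remaining generators. A cleaner route is to avoid extra generators altogether: take \(Z=Y\sqcup A\) and compose with injective maps. Precisely, fix an injective homomorphism \(\iota\colon F_A\to F_Y\). This exists because \(|Y|\ge2\) can be arranged, for instance by adding dummy generators on which \(g\) and \(h\) are set to distinct free letters, or simply because free groups of rank \(\ge2\) contain free groups of every finite rank. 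Then put \(\alpha=g\iota\) and \(\beta=h\iota\), both endomorphisms of \(F_Y\). This is exactly the argument of Corollary~\ref{cor:target-F2}: injectivity of \(\iota\) gives \(\Eq(\alpha,\beta)=\Eq(g,h)\), and \(\beta\) is injective as a composition of injective maps.

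The only real obstacle is guaranteeing that \(F_A\) embeds in \(F_Y\), which requires \(\operatorname{rank}F_Y\ge2\). If \(\operatorname{rank} F_Y\le1\), then \(F_Y\) is cyclic and triviality of \(\Eq(g,h)\) is decidable directly: it is nontrivial iff \(y^n g=y^n h\) for some \(n\ne0\), iff \(yg=yh\), because roots in free groups are unique. Therefore, after handling rank \(\le1\) algorithmically, we may restrict the reduction to instances with \(\operatorname{rank}F_Y\ge2\). There we compute an explicit embedding \(\iota\), for example \(a_j\mapsto y_1^{j}y_2y_1^{-j}\). Undecidability then follows from Corollary~\ref{pcp undec}, since the reduction is effective and preserves the answer.
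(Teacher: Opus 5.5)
Your final argument is the paper's: compose $g,h\colon F_Y\to F_A$ with an embedding $\iota\colon F_A\to F_Y$, so that $\Eq(g\iota,h\iota)=\Eq(g,h)$ and $h\iota$ is injective. You also treat the case $\operatorname{rank}F_Y\le 1$ separately (it is decidable, since $(yg)^n=(yh)^n$ with $n\neq 0$ forces $yg=yh$), which correctly covers a point the paper leaves implicit; the abandoned free-product attempts and the unjustified dummy-generator aside are not used and can be dropped.
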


\begin{proof}
	Let $g,h:F_Y\to F_A$ be two homomorphisms between free groups, with $h$ injective. Let
	\(
	\iota:F_A\longrightarrow F_Y
	\)
	be  an injective homomorphism. As above, defining
	\[
	g'= g\iota,
	\qquad
	h'= h\iota,
	\]
	we have that 
	\[
	\Eq(g',h')=\Eq(g,h),
	\]
	and $h'$ is injective.   
\end{proof}

The following corollary follows directly from \cite[Theorem 4.1]{[Car26]} and Corollary \ref{pcp undec}.
\begin{corollary}
	There is no algorithm that takes as input two free groups $F_n$ and $F_m$, and two endomorphisms $\phi,\psi\in \End(F_n\times F_m)$ that decides whether $\Fix(\phi)\cap \Fix(\psi)$ is trivial or not.
\end{corollary}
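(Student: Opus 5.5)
The corollary follows by a direct reduction from Corollary~\ref{pcp undec}, via \cite[Theorem 4.1]{[Car26]}. I will assume that theorem is an effective translation of the following kind. Given homomorphisms $g,h\colon F_n\to F_m$, one can effectively build endomorphisms $\phi,\psi$ of $F_n\times F_m$ such that $\Fix(\phi)\cap\Fix(\psi)$ is trivial if and only if $\Eq(g,h)$ is trivial. Alternatively, it may say the intersection is isomorphic to, or in explicit bijection with, a group built from $\Eq(g,h)$, which suffices just as well.

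To make the reduction concrete, I would try the natural construction. Define
\[
(x,y)\phi=(x,\,xg),\qquad (x,y)\psi=(x,\,xh),
\]
where $x\in F_n$, $y\in F_m$ and $g,h\colon F_n\to F_m$. Both maps are endomorphisms because the two factors commute and $x\mapsto (x,xg)$ is a homomorphism into the direct product. Then:
\begin{itemize}
\item $\Fix(\phi)=\{(x,xg)\mid x\in F_n\}$ and $\Fix(\psi)=\{(x,xh)\mid x\in F_n\}$;
\item so $\Fix(\phi)\cap\Fix(\psi)=\{(x,xg)\mid xg=xh\}$;
\item and $x\mapsto (x,xg)$ is an isomorphism from $\Eq(g,h)$ onto this intersection.
\end{itemize}
Hence the intersection is trivial exactly when $\Eq(g,h)=\{1\}$.

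The proof then runs as follows. Suppose an algorithm decided triviality of $\Fix(\phi)\cap\Fix(\psi)$. Given an instance $g,h\colon F_Y\to F_A$ of the free-group PCP, put $n=|Y|$ and $m=|A|$. Construct $\phi,\psi$ as above; this is effective, since their values on generators are explicit words. Running the hypothetical algorithm would decide whether $\Eq(g,h)$ is trivial, contradicting Corollary~\ref{pcp undec}. The injectivity of $h$ plays no role here, although the instances may be taken to have it.

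The only step needing care is checking that the cited theorem, or the construction above, really gives an equivalence at the level of triviality and not merely some weaker relation. The explicit computation of the two fixed subgroups settles this directly, so I expect no real obstacle.
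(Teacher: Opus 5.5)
Your proposal is correct and follows the paper's route, a reduction from Corollary~\ref{pcp undec}. The only difference is that the paper cites \cite[Theorem 4.1]{[Car26]} as a black box, while you write the reduction out: with $(x,y)\phi=(x,xg)$ and $(x,y)\psi=(x,xh)$, the fixed subgroups intersect in $\{(x,xg)\mid x\in\Eq(g,h)\}\cong\Eq(g,h)$, which makes the argument self-contained and independent of the exact form of the cited theorem. Both of your endomorphisms kill $\{1\}\times F_m$, so neither is injective, which is consistent with the paper's remark that the problem becomes decidable once one map is a monomorphism \cite[Corollary 3.4]{[Car26]}.
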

We then have that there is an algorithm taking as input a monomorphism and an endomorphism of $F_n\times F_m$ that decides if the intersection of their fixed subgroups is trivial or not \cite[Corollary 3.4]{[Car26]}, but when we remove the injectivity assumption, the problem becomes undecidable.

\subsection{Complete transducers and virtual endomorphisms}
 A homomorphism $\phi:H\to G$ where $H$ is a finite index subgroup of $G$ is said to be a \emph{virtual endomorphism}. We now prove that, although the fixed subgroup of an endomorphism of the free group is computable, when considering virtual endomorphisms, triviality of the fixed subgroup cannot be decided.

\begin{theorem}\label{thm:virtual-triviality}
	There is no algorithm which, given a finitely generated free group $F$, a finite-index
	subgroup $H\leq F$, and a homomorphism
	\(
	\theta:H\longrightarrow F,
	\)
	decides whether
	\[
	\mathrm{Fix}(\theta)=\{1\}.
	\]
\end{theorem}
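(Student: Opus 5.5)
We reduce from Theorem~\ref{thm:partial trans}: given a cyclic tag system $\mathcal C$, we produce $F$, a finite-index subgroup $H\le F$ and $\theta\colon H\to F$ with $\Fix(\theta)\neq\{1\}$ if and only if $\mathcal C$ halts.

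\textbf{Completion.} Take the transducer $\T_{\mc C}$ on $A$ with state set $Q$, and complete it to a finite complete deterministic inverse transducer $\T'$ with the same states. The input automaton of $\T_{\mc C}$ is a partial injective automaton. For each letter $a\in A$ it defines a partial injection of $Q$, and any partial injection of a finite set extends to a permutation. For each missing transition $q\xrightarrow{a}q'$ added in this way, together with its inverse, we choose an output that ruins any fixed point using it. The simplest choice is the output $1$, which pushes the input letter into the discrepancy.

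\textbf{Defining $\theta$.} Since $\T'$ is complete, the set $K'$ of closed input paths at $s$ is a subgroup of finite index in $F_A$; its index is the size of the orbit of $s$. By the argument of Theorem~\ref{thm:transducer-to-equalizer}, $\theta=\widetilde{\T'}|_{K'}$ is a homomorphism $K'\to F_A$. We take $F=F_A$ and $H=K'$, all computed effectively from the Stallings graph.

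\textbf{Matching fixed points.} Suppose $\mathcal C$ halts. The fixed loop at $s$ from Theorem~\ref{thm:partial trans} uses only original edges, so it lies in $\Fix(\theta)$, which is therefore nontrivial. The converse is the main obstacle: we must show that a nontrivial fixed loop of $\T'$ at $s$ cannot use the new edges. The new edges can create new closed paths at $s$, and their discrepancy bookkeeping must be controlled.

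\textbf{Controlling the new edges.} The plan is to make the new edges easy to detect. Add a fresh letter $z$ to the output alphabet only, so the input alphabet stays $A$ and the codomain becomes $F_{A\cup\{z\}}$; this is harmless, since we may take $F=F_{A\cup\{z\}}$ and $H=K'$, which is no longer of finite index in $F$. To keep finite index, rewrite instead: give each new edge an output containing a letter $z\in A$ that does not occur in $\T_{\mc C}$. Concretely, take $A'=A\cup\{z\}$ with $z$ an input letter as well, with all $z$-transitions new and with output $zz$ on each new edge $q\xrightarrow{a}q'$. Then use the homomorphism $\chi_z$, the exponent sum of $z$. On a closed fixed path the input and output have equal $\chi_z$. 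Each new edge $a\mid zz$ with $a\neq z$ contributes $+2$ or $-2$ to the output, and $z$-edges $z\mid zz$ contribute $+1$ net.

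This count alone is not obviously decisive, so the main work is a cancellation argument analogous to Lemma~\ref{lem:wrong-edge}. The aim is to show that, once a new edge is crossed, the discrepancy acquires a $z$-letter that the original edges (whose outputs are $z$-free) cannot remove. Hence only further new edges could remove it, and a counting argument with $\chi_z$ and a second homomorphism (for instance the total exponent sum) would rule this out.

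If this is too delicate, the fallback is an alternative route to Theorem~\ref{thm:virtual-triviality}. Use Theorem~\ref{thm:transducer-to-equalizer} with $h\colon F_Y\to F_A$ injective onto $K$ and embed $K$ as a finite-index subgroup via Marshall Hall's theorem: $K$ is a free factor of some finite-index $H\le F_A$, say $H=K*L$. Define $\theta\colon H\to F_A$ as $h^{-1}g$ on $K$. On $L$, map the basis of $L$ to elements chosen so that no nontrivial element of $H$ involving $L$ is fixed, for instance sending $L$ to $1$ when $\theta|_K$ is not onto in a way that creates fixed elements. Verifying this last choice is again the crux.
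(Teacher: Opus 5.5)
There is a genuine gap: you never prove the converse direction. The reduction from Theorem~\ref{thm:partial trans} and the forward implication (a halting computation gives a fixed loop using only original edges) are fine. But you never show that, when $\mathcal C$ does not halt, no nontrivial element of $K'$ is fixed. Each of your three routes stops at the point you yourself call the crux: the output-$1$ completion, the extra letter $z$ with outputs $zz$, and the splitting $H=K*L$.

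The last route is also doubtful as stated. If $\theta$ kills $L$, then $x\theta=(x\pi)\psi$ for the retraction $\pi\colon H\to K$, and a fixed point only needs $x=(x\pi)\psi$. So any $k\in K$ with $k\psi=k\ell$ for some $1\neq\ell\in L$ gives a spurious fixed point $x=k\ell\notin K$.

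The paper's missing idea is that no completion is needed at all. Deleting $s$ and $e_{\#}$ leaves the numbered-state transducer $\mathcal R_{\mathcal C}$ over $B=\{0,1,H,p\}$, which is already complete: each numbered state has exactly one outgoing edge for every letter of $\widetilde B$. Take $F=F_B$ and let $H$ be the stabilizer of state $0$, which has finite index. The $\#$-edge is then absorbed by conjugation: put $x\theta=d_0(x\widetilde{\mathcal R}_{\mathcal C})d_0^{-1}$. With this choice, $\#x\#^{-1}$ is fixed by $\widetilde{\mathcal T}_{\mathcal C}$ if and only if $x\theta=x$, and Theorem~\ref{thm:partial trans} shows that every nontrivial fixed loop at $s$ has this form.

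Alternatively, your first completion with output $1$ does work, once you supply the following argument.
\begin{itemize}
\item In a reduced closed path, every traversal of a $\#$-edge contributes one $\#^{\pm1}$ to the reduced input. Only traversals of $e_{\#}^{\pm1}$ contribute one to the output, since new edges output $1$ and numbered edges output words over $\widetilde B$. Hence a fixed path uses no new $\#$-edges.
\item The path therefore has the form $x_0e_{\#}y_1e_{\#}^{-1}x_1\cdots e_{\#}y_ke_{\#}^{-1}x_k$, where the $x_j$ are paths of new loops at $s$ and the $y_j$ are loops at $0$ with outputs $Y_j$.
\item Its output collapses to $\#d_0Y_1\cdots Y_kd_0^{-1}\#^{-1}$, which has at most two $\#$-letters, against $2k$ in the input. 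This forces $k\leq 1$ and $x_0=x_1=1$, so the path is a fixed loop of $\mathcal T_{\mathcal C}$.
\end{itemize}
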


\begin{proof}
	Let $\mathcal C$ be a cyclic tag system and let $\T_{\mathcal C}$ be the partial inverse
	transducer constructed in Section~3. Put
	\[
	B=\{0,1,H,p\},
	\qquad
	d_0=w_0p,
	\]
	and let $\mc R_{\mathcal C}$ be the transducer obtained from $\T_{\mathcal C}$ by deleting the
	state $s$ and the  $\#$-edge. Thus $\mc R_{\mathcal C}$ has state set
	\(
	Q=\{0,\ldots,m-1\}
	\)
	and is a finite complete deterministic inverse transducer over $\widetilde B$.
	
	Define
	\[
	H_{\mathcal C}
	=
	\{x\in F_B\mid(0,x)\widetilde\delta=0\}
	\]
	to be the input labels of reduced paths labelling loops at $0$.
	Since $\mc R_{\mathcal C}$ is complete and inverse, $F_B$ acts on the finite set $Q$, and
	$H_{\mathcal C}$ is the stabilizer of $0$. Hence $H_{\mathcal C}\leq F_B$ has finite index.
	
	For $x\in H_{\mathcal C}$, define
	\[
	x\psi_{\mathcal C}=x\widetilde{\mc R}_{\mathcal C}.
	\]
	The map $\psi_{\mathcal C}:H_{\mathcal C}\longrightarrow F_B$ is a homomorphism,
	since every element of $H_{\mathcal C}$ returns the transducer to state $0$.
	
	Define
	\(
	\theta_{\mathcal C}:H_{\mathcal C}\longrightarrow F_B
	\)
	by
	\[
	x\theta_{\mathcal C}=d_0(x\psi_{\mathcal C})d_0^{-1}.
	\]
	Let $x\in H_{\mathcal C}$. The path in $\T_{\mathcal C}$ labelled by
	\[
	\#x\#^{-1}
	\]
	has output
	\[
	\#d_0(x\psi_{\mathcal C})d_0^{-1}\#^{-1}.
	\]
	Consequently,
	\[
	\#x\#^{-1}
	\text{ is fixed}
	\quad\Longleftrightarrow\quad
	x=x\theta_{\mathcal C}.
	\]
 Therefore
	\[
	\Fix(\theta_{\mathcal C})\neq\{1\}
	\quad\Longleftrightarrow\quad
	\T_{\mathcal C}\text{ has a nontrivial fixed loop at }s.
	\]
	By Theorem~\ref{thm:partial trans}, this holds if and only if $\mathcal C$ halts.
\end{proof}

Fixed subgroups of virtual endomorphisms are closely related to fixed loops of complete inverse transducers. Silva proved in \cite[Theorem 3.2]{[Sil13]} that  for every finite complete inverse transducer
\(\mathcal T\), the set $\Fix(\wt\T)$ was a rational subset of the free group, which, from Benois's Theorem means only that the language of all reduced words representing elements of $\Fix(\wt\T)$ is rational \cite{[Ben79]}.
 The next result shows that Theorem \ref{thm:virtual-triviality} implies that 
this regular language cannot be constructed effectively from
\(\mathcal T\). In fact, it cannot be decided whether such a transducer has a nontrivial fixed point.  We start with a technical lemma.

\begin{lemma}\label{lem:state-conjugation}
	Let $\T=(Q,q_0,\delta,\lambda)$ be a finite complete deterministic inverse
	$\widetilde A$-transducer, and choose elements $c_q\in F_A$ for $q\in Q$. Construct a transducer $\T^c$ with the same states and input
	transitions by replacing every edge
	\[
	q\xrightarrow{\ a\mid U\ }r
	\]
	with
	\[
	q\xrightarrow{\ a\mid c_q^{-1}Uc_r\ }r.
	\]
	Then $\T^c$ is a finite complete deterministic inverse transducer. Moreover, if reading
	$x\in F_A$ from $q_0$ terminates at $q$, then
	\begin{equation}\label{eq:state-conjugation}
		x\widetilde{\T^c}=c_{q_0}^{-1}(x\widetilde \T)c_q.
	\end{equation}
\end{lemma}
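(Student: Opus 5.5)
The plan is to verify the structural claims first and then prove \eqref{eq:state-conjugation} by induction on the length of a word read from $q_0$. Once the word-level identity holds, the earlier lemma on induced partial maps lets it pass to elements of $F_A$.

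\emph{Structural claims.} Since $\T^c$ has the same states and the same input transitions as $\T$, it is finite, complete and deterministic. For inverseness, note that the edge $q\xrightarrow{a\mid U}r$ of $\T$ is paired with $r\xrightarrow{a^{-1}\mid U^{-1}}q$. In $\T^c$ the second edge receives the output
\[
c_r^{-1}U^{-1}c_q=(c_q^{-1}Uc_r)^{-1},
\]
which is exactly the inverse of the new output on the first edge. So the inverse condition holds. Outputs are now words over $\widetilde A$ rather than positive words, but the definition of an inverse $\widetilde A$-transducer already allows this: we may take $c_q^{-1}Uc_r$ as any word representing that element.

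\emph{Identity for words.} We show that if a word $u\in\widetilde A^*$ is read from a state $q$ and ends at $r$, then the output of $\T^c$ is
\[
(q,u)\lambda^c=c_q^{-1}\,(q,u)\lambda\,c_r \quad\text{in } F_A.
\]
We argue by induction on $|u|$. The empty word is trivial, since $c_q^{-1}c_q=1$. For the inductive step, write $u=u'a$ with $u'$ ending at $r'$ and the last edge being $r'\xrightarrow{a\mid U}r$. By the inductive hypothesis the output is
\[
c_q^{-1}\,(q,u')\lambda\,c_{r'}\cdot c_{r'}^{-1}Uc_r .
\]
This is a telescoping product, and it equals $c_q^{-1}(q,u)\lambda\, c_r$.

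\emph{Passing to $F_A$.} By the earlier lemma on induced partial maps, $x\widetilde{\T^c}$ and $x\widetilde\T$ are obtained by reading the reduced representative $\red{x}$ from $q_0$. Since the input transitions are shared, both readings end at the same state $q$. Applying the word identity with $u=\red{x}$ and starting state $q_0$ yields \eqref{eq:state-conjugation}.

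None of these steps is a real obstacle; the only point needing care is checking that the conjugated labels respect the inverse-edge pairing, and the computation above settles it.
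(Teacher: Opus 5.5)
Your proposal is correct and follows essentially the same route as the paper: the unchanged input transitions give finiteness, completeness and determinism, the conjugated labels on paired edges are mutually inverse, and the formula follows by induction on length, with the intermediate factors $c_{r'}^{\phantom{-1}}c_{r'}^{-1}$ cancelling. Your explicit appeal to the earlier well-definedness lemma, to pass from words to elements of $F_A$, spells out a step the paper leaves implicit.
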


\begin{proof}
	The input of each transition is unchanged, so $\T^c$ is finite, complete, and deterministic. Also $\T^c$ is inverse: if $\T$ has the transitions
	\[
	q\xrightarrow{\ a\mid U\ }r\qquad \text{ and } \qquad 
	r\xrightarrow{\ a^{-1}\mid U^{-1}\ }q,
	\]
	we transform them into the following transitions:
	\[
	q\xrightarrow{\ a\mid c_q^{-1}Uc_r\ }r\qquad \text{ and } \qquad 
	r\xrightarrow{\ a^{-1}\mid c_r^{-1}U^{-1}c_q\ }q.
	\]
	These are mutually inverse, so $\T^c$ is inverse.
	Formula~\eqref{eq:state-conjugation} can be easily seen by induction on the length, performing the cancellations along the way.
\end{proof}

\begin{theorem}\label{thm:complete-fixed-point}
	From a cyclic tag system $\mathcal C$, one can effectively construct a finite complete
	deterministic inverse transducer ${\mc S}_{\mathcal C}$ such that the following conditions are
	equivalent:
	\begin{enumerate}[label=(\roman*)]
		\item $\mathcal C$ halts;
		\item ${\mc S}_{\mathcal C}$ has a nontrivial fixed loop;		\item $\Fix(\widetilde {\mc S}_{\mathcal C})\neq\{1\}$.
	\end{enumerate}
	Moreover, every fixed point of $\widetilde S_{\mathcal C}$ is represented by a closed path.
\end{theorem}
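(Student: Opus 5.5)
We take ${\mc S}_{\mathcal C}=\mc R_{\mathcal C}^{c}$, where $\mc R_{\mathcal C}$ is the complete transducer from the proof of Theorem~\ref{thm:virtual-triviality} (states $\{0,\ldots,m-1\}$, alphabet $B=\{0,1,H,p\}$, initial state $0$). The conjugating elements are given by Lemma~\ref{lem:state-conjugation} with the constant choice $c_q=d_0^{-1}$ for every $q\in Q$. By that lemma, ${\mc S}_{\mathcal C}$ is a finite complete deterministic inverse transducer, and it is clearly computable from $\mathcal C$. Moreover, if $x\in F_B$ ends at state $q$, then
\[
x\widetilde{{\mc S}}_{\mathcal C}=d_0\,(x\widetilde{\mc R}_{\mathcal C})\,d_0^{-1}.
\]
In particular, on $H_{\mathcal C}$ the map $\widetilde{{\mc S}}_{\mathcal C}$ coincides with the virtual endomorphism $\theta_{\mathcal C}$. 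The fixed loops of ${\mc S}_{\mathcal C}$ at $0$ are therefore exactly the elements of $\Fix(\theta_{\mathcal C})$. The proof of Theorem~\ref{thm:virtual-triviality} (through Theorem~\ref{thm:partial trans}) shows that such a nontrivial fixed loop exists if and only if $\mathcal C$ halts. This gives (i)$\Leftrightarrow$(ii), and (ii)$\Rightarrow$(iii) is immediate.

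The remaining work is the ``moreover'' assertion, which yields (iii)$\Rightarrow$(ii). Let $x\in\Fix(\widetilde{{\mc S}}_{\mathcal C})$ and let $q$ be its terminal state. Then $x=d_0(x\widetilde{\mc R}_{\mathcal C})d_0^{-1}$, i.e.
\[
x^{-1}d_0\,(x\widetilde{\mc R}_{\mathcal C})=d_0 .
\]
The left-hand side is exactly the discrepancy obtained by reading $x$ in $\mc R_{\mathcal C}$ from the pair $(d_0,0)$, using formula~\eqref{eq:path-update}. So reading $x$ sends $(d_0,0)$ to $(d_0,q)$, and we must show $q=0$.

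For this we introduce a second invariant in the spirit of $\chi$. Let $\eta\colon F_B\to\Z$ be the exponent sum of $H$. For a pair $(d,i)$, consider $\eta(d)-i \bmod m$. An edge $i\xrightarrow{a\mid U}j$ updates $d$ to $a^{-1}dU$, so $\eta$ changes by $\eta(U)-\eta(a)$. We check this on each edge type.
\begin{itemize}
\item For the edges $0\mid H$ and $1\mid u_iH$, $\eta$ increases by $1$, since $u_i\in\{0,1\}^*$. The state also increases by $1$.
\item For the edges $H\mid H$ and $p\mid p$, neither $\eta$ nor the state changes.
\end{itemize}
The inverse edges reverse both changes. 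Hence $\eta(d)-i\bmod m$ is invariant along every path of $\mc R_{\mathcal C}$. Since $\eta(d_0)=0$, reaching $(d_0,q)$ from $(d_0,0)$ forces $q\equiv 0\pmod m$, so $q=0$. Thus every fixed point, with its freely reduced representative, labels a closed path at $0$ and is a fixed loop. This gives the ``moreover'' statement and (iii)$\Rightarrow$(ii).

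The main point requiring care is this last step. Lemma~\ref{lem:closed-path-at-zero} only treats closed paths, so it cannot be applied directly. The $\eta$-invariant reduces the general fixed point to that closed case, and it is the ingredient we must verify edge by edge, including the inverse edges. It is also worth double-checking that $\mc R_{\mathcal C}$ is complete: at each numbered state every letter of $B$ is defined, and every inverse letter is defined because the input maps $0,1\colon i\mapsto i+1$ and $H,p\colon i\mapsto i$ are permutations of $Q$.
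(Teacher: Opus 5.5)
Your proof is correct, and the reduction of (i)$\Leftrightarrow$(ii) to $\Fix(\theta_{\mathcal C})$ matches the paper's. It differs at the one step that needs work: showing that every fixed point ends at state $0$.

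The paper does not use constant conjugators. It takes $c_i=d_0^{-1}p^i$, so each edge $i\xrightarrow{a\mid U}j$ acquires output $p^{-i}d_0Ud_0^{-1}p^{j}$, and the $p$-exponent of the output records the terminal state. Combined with the edgewise invariance of $\chi$ from Section 3, this gives the exact integer identity $\chi(x\widetilde{\mc S}_{\mathcal C})-\chi(x)=j$. Hence $j=0$ for a fixed point, with no further analysis of $\mc R_{\mathcal C}$.

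Your choice $c_q=d_0^{-1}$ gives a simpler transducer: $\widetilde{\mc S}_{\mathcal C}$ is just $\widetilde{\mc R}_{\mathcal C}$ followed by conjugation by $d_0$. The price is that $\chi$ can no longer see the state. You therefore need the second invariant $\eta(d)-i \bmod m$, which relies on the specific fact that each binary edge emits exactly one $H$ and advances the state by one. Equivalently, a fixed point must have total exponent zero in the binary letters.

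The paper's twisting device is more portable: it forces fixed points to be loops for any complete inverse transducer with an edgewise-preserved exponent sum and integer-labelled states. Yours stays closer to the discrepancy dynamics of Section 3, since it makes the fixed-point equation visibly equivalent to the discrepancy pair returning from $(d_0,0)$ to $(d_0,q)$.
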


\begin{proof}
	Retain the notation
	\[
	B=\{0,1,H,p\},
	\qquad
	d_0=w_0p,
	\]
	and let $\mc R_{\mathcal C}$ be the complete numbered-state transducer used in the proof of
	Theorem~\ref{thm:virtual-triviality}. Define
	\(
	\chi:F_B\longrightarrow\mathbb Z
	\)
	by
	\[
	\chi(p)=1,
	\qquad
	\chi(0)=\chi(1)=\chi(H)=0.
	\]
	That is $\chi$ is the exponent-sum of $p$.
	As in \eqref{eq:chi-preserved}, every transition
	\(
	i\xrightarrow{\ a\mid U\ }j
	\)
	of $\mc R_{\mathcal C}$ satisfies
	\(
	\chi(U)=\chi(a).
	\)
	
	For $i\in\{0,\ldots,m-1\}$, put
	\(
	c_i=d_0^{-1}p^i.
	\)
	Thus
	\[
	c_0=d_0^{-1},
	\qquad \text{ and } \qquad
	\chi(c_i)=i-1.
	\]
	Let
	\(
	\mc S_{\mathcal C}=\mc R_{\mathcal C}^{c}
	\)
	be the transducer obtained as in Lemma~\ref{lem:state-conjugation}.
	
	Let $x\in F_B$, and suppose that reading $x$ from state $0$ terminates at state $j$.
	Formula~\eqref{eq:state-conjugation} gives
	\[
	x\widetilde{\mc S}_{\mathcal C}
	=
	c_0^{-1}(x\widetilde{\mc R}_{\mathcal C})c_j.
	\]
	Since $\chi$ agrees on the input and output of every transition of $\mc R_{\mathcal C}$,
	\[
	\chi(x\widetilde{\mc R}_{\mathcal C})=\chi(x).
	\]
	Consequently,
	\[
	\chi(x\widetilde{\mc S}_{\mathcal C})-\chi(x)
	=-\chi(c_0)+\chi(c_j)=j.
	\]
	If $x\widetilde{\mc S}_{\mathcal C}=x$, then $j=0$. Hence every fixed point of
	$\widetilde S_{\mathcal C}$ labels a closed path at state $0$, and conditions (ii) and (iii)
	are equivalent.
	
	Now let $H_{\mc C}$ and $\psi_{\mc C}$ and $\theta_{\mc C}$ be the subgroup and homomorphism from the proof of Theorem ~\ref{thm:virtual-triviality}. Fixed points of $\theta_{\mc C}$ must belong to $H_{\mc C}$ by definition, so they label loops at $0$.
 	Therefore, for $x\in H_{\mc C}$,
	\[
	x\widetilde{\mc S}_{\mathcal C}
	=
		c_0^{-1}(x\widetilde{\mc R}_{\mathcal C})c_0
	=
	d_0(x\psi_{\mathcal C})d_0^{-1}
	=
	x\theta_{\mathcal C}.
	\]
	Thus
	\(	\Fix(\widetilde{\mc S}_{\mathcal C})=\Fix(\theta_{\mathcal C}).
	\) and the result follows from the proof of Theorem~\ref{thm:virtual-triviality}.
\end{proof}

\begin{corollary}
	It is undecidable whether a finite complete deterministic inverse transducer has a nontrivial
	fixed point.
\end{corollary}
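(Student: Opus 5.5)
This is a direct reduction from the halting problem for cyclic tag systems, using Theorem~\ref{thm:complete-fixed-point}. Suppose, for contradiction, that an algorithm $\mathfrak A$ decides, for any finite complete deterministic inverse transducer $\mathcal T$, whether $\Fix(\widetilde{\mathcal T})\neq\{1\}$. Given a cyclic tag system $\mathcal C=(u_0,\ldots,u_{m-1})$ with initial word $w_0$, we run the following procedure:
\begin{enumerate}
\item Build $\T_{\mathcal C}$ from Section~\ref{sec:construction}.
\item Delete the state $s$ and the $\#$-edges to obtain the complete transducer $\mc R_{\mathcal C}$ over $\widetilde B$, where $B=\{0,1,H,p\}$.
\item Compute $d_0=w_0p$ and the words $c_i=d_0^{-1}p^i$ for $0\leq i\leq m-1$.
\item Replace each edge $i\xrightarrow{a\mid U}j$ by $i\xrightarrow{a\mid c_i^{-1}Uc_j}j$, producing $\mc S_{\mathcal C}=\mc R_{\mathcal C}^{c}$.
\end{enumerate}
Every step is an explicit finite manipulation of words, so $\mc S_{\mathcal C}$ is effectively computable from $\mathcal C$.

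By Lemma~\ref{lem:state-conjugation}, $\mc S_{\mathcal C}$ is again a finite complete deterministic inverse transducer, so it is a legitimate input to $\mathfrak A$.

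We run $\mathfrak A$ on $\mc S_{\mathcal C}$. By the equivalence (i)$\Leftrightarrow$(iii) of Theorem~\ref{thm:complete-fixed-point}, $\mathfrak A$ answers ``nontrivial'' exactly when $\mathcal C$ halts. This would decide the halting problem for cyclic tag systems, which is undecidable by Cook's theorem \cite{[Coo04]}. This contradiction proves the corollary.

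There is no genuine obstacle here: all the substance lies in Theorem~\ref{thm:complete-fixed-point}. The only point needing care is effectiveness, in particular that the relabelled outputs $c_i^{-1}Uc_j$ can be computed (and freely reduced if desired) from the data of $\mathcal C$. This is immediate, since the $c_i$ are explicit words.
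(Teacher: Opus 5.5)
Your proposal is correct and is essentially the paper's argument: the paper states this corollary without a separate proof, as an immediate consequence of the effective construction of $\mc S_{\mathcal C}$ in Theorem~\ref{thm:complete-fixed-point} (equivalence (i)$\Leftrightarrow$(iii)) together with the undecidability of halting for cyclic tag systems. Your explicit checks, that $\mc S_{\mathcal C}$ is computable and that it is a valid input by Lemma~\ref{lem:state-conjugation}, are exactly what that reduction needs; the only detail worth stating is that the initial state of $\mc S_{\mathcal C}$ is $0$.
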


This construction yields undecidability of PCP even when one of the homomorphisms is injective and has a finite index image.
\begin{corollary} 
	The PCP is undecidable for free groups, even when one of the two homomorphisms is
	injective and has finite-index image.
\end{corollary}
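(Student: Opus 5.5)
We combine Theorem~\ref{thm:complete-fixed-point} with the construction in the proof of Theorem~\ref{thm:transducer-to-equalizer}, applied to the complete transducer $\mc S_{\mathcal C}$ in place of a partial one. Given a cyclic tag system $\mathcal C$, we build $\mc S_{\mathcal C}$ with initial state $0$, and take
\[
K=H_{\mathcal C}=\{x\in F_B\mid (0,x)\widetilde\delta=0\}.
\]
Because $\mc S_{\mathcal C}$ is complete and inverse, $F_B$ acts on the finite state set $\{0,\ldots,m-1\}$ by permutations, and $K$ is the stabilizer of $0$. Hence $K$ has index at most $m$ in $F_B$.

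Next we compute a free basis $b_1,\ldots,b_r$ of $K$. One way is from the Stallings automaton, which here is the whole input automaton of $\mc S_{\mathcal C}$ based at $0$; a spanning tree gives a Schreier basis. We then define $g,h\colon F_Y\to F_B$ exactly as in Theorem~\ref{thm:transducer-to-equalizer}:
\[
y_ih=b_i,\qquad y_ig=b_i\widetilde{\mc S}_{\mathcal C}.
\]
As there, $h$ is injective with image $K$, so $h$ has finite-index image. Also $\widetilde{\mc S}_{\mathcal C}|_K$ is a homomorphism, since every element of $K$ returns to $0$. So $g=h\circ(\widetilde{\mc S}_{\mathcal C}|_K)$ is a homomorphism.

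The argument of Theorem~\ref{thm:transducer-to-equalizer} then gives
\[
\Eq(g,h)\neq\{1\}\iff \mc S_{\mathcal C}\text{ has a nontrivial fixed loop at }0.
\]
By Theorem~\ref{thm:complete-fixed-point}, this holds if and only if $\mathcal C$ halts. The "moreover" clause of that theorem, which says every fixed point labels a closed path at $0$, guarantees that looking only at loops at $0$ loses nothing. Since the halting problem for cyclic tag systems is undecidable, so is this restricted PCP.

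The only step needing care is the identification "fixed loop of $\mc S_{\mathcal C}$" $=$ "fixed loop at the initial state $0$". This is exactly the $\chi$-computation in the proof of Theorem~\ref{thm:complete-fixed-point}: a fixed $x$ ending at state $j$ forces $j=0$. Everything else, including effectiveness of the basis computation and of evaluating $g$ by reading each $b_i$ through the transducer, is routine.
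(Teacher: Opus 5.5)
Your proof is correct and follows the paper's own argument. You apply the equalizer construction of Theorem~\ref{thm:transducer-to-equalizer} to the complete transducer $\mc S_{\mathcal C}$. You get finite index of the image of $h$ from completeness (it is the stabilizer of state $0$), and you conclude via Theorem~\ref{thm:complete-fixed-point}. One small notational point: in the paper's right-action convention $xg=(xh)\psi$, so $g$ is ``$h$ followed by $\psi$'', which is $\psi\circ h$ in ordinary composition notation, not $h\circ\psi$.
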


\begin{proof}
	Apply Theorem~\ref{thm:transducer-to-equalizer} to the complete transducers
	$\mc S_{\mathcal C}$ from Theorem~\ref{thm:complete-fixed-point}. Since every fixed point of
	$\mc S_{\mathcal C}$ is a fixed loop, the corresponding equalizer is isomorphic to
	$\Fix(\widetilde{\mc S}_{\mathcal C})$. Since $\mc S_{\mathcal C}$ is complete, the image of the
	injective homomorphism supplied by Theorem~\ref{thm:transducer-to-equalizer} has finite
	index.
\end{proof}

\section{Applications to groups of matrices over the integers}

We conclude with two consequences for algorithmic problems in arithmetic
matrix groups.

Let \(R\) be a subring of \(\mathbb C\), let
\(
M_1,\ldots,M_k\in \operatorname{GL}_n(R),
\)
and put
\(
G=\langle M_1,\ldots,M_k\rangle.
\)
The \emph{stabilizer problem} asks, given in addition a vector
\(
u\in R^n,
\)
whether
\(
\operatorname{Stab}_G(u)
=
\{M\in G\mid Mu=u\}
\)
is nontrivial.

The \emph{upper-right-corner problem} asks whether there exists a
nonidentity matrix
\(
I_n\neq M\in G
\)
such that
\(
M_{1,n}=0.
\)

Breuillard and Kocharyan proved that the stabilizer problem is
undecidable for finitely generated subgroups of
\(\operatorname{GL}_n(\mathbb Z)\) in dimension at least nine, and that
the upper-right-corner problem is undecidable for finitely generated
subgroups of \(\operatorname{GL}_n(\mathbb Q)\) in dimension at least
nine \cite{[BK25]}. Their matrix-group reductions use the undecidable word
problem and Mihailova-type membership constructions.

Using the Post Correspondence Problem for free groups, we obtain
undecidability in dimensions four and five, respectively. Our results
hold for integral matrices of determinant one, under the additional
promise that the matrices in the input tuple freely generate the subgroup
which they generate. For the stabilizer problem, the vector is fixed in
advance and is not part of the input.

The classical Post Correspondence Problem has long been used to prove
undecidability results for matrix semigroups. Breuillard and Kocharyan
recently considered the analogous questions for matrix groups \cite{[BK25]} following work by Dixon \cite{[Dix85]}. They proved that
the stabilizer problem is undecidable in dimension at least nine; although
their theorem is stated over \(\mathbb Q\), their proof already gives the
corresponding result for finitely generated subgroups of
\(\operatorname{GL}_n(\mathbb Z)\) acting on \(\mathbb Z^n\). They also proved
that the upper-right-corner problem is undecidable for finitely generated
subgroups of \(\operatorname{GL}_n(\mathbb Q)\) in dimension at least nine
\cite{[BK25]}. 
 We will use
 Corollary~\ref{cor:target-F2} to lower the dimension (a stronger version) of the integral
 stabilizer problem to four and to prove (a stronger version of) the upper-right-corner result over
 the integers in dimension five. In particular, we answer the question posed by Breuillard and Kocharyan of whether the upper-right-corner problem was undecidable for integral matrices.

Fix an effective embedding
\[
\rho:F_2\longrightarrow\operatorname{SL}_2(\mathbb Z)
\]
such that every nonidentity element of \(\rho(F_2)\) is hyperbolic. Such an
embedding is given, for example, by mapping a free basis of \(F_2\) to
\[
\begin{pmatrix}3&2\\1&1\end{pmatrix}
\qquad\text{and}\qquad
\begin{pmatrix}1&1\\2&3\end{pmatrix}.
\]
Indeed, these matrices freely generate a subgroup all of whose nonidentity elements
have trace of absolute value greater than two
\cite[Theorem~6]{[BK25]}.

We start by showing an undecidability result concerning the intersection of free matrix subgroups with a fixed free subgroup of $\operatorname{SL}_4(\Z)$ which, to the authors knowledge, appears to be new.

Let
\[
\Delta_\rho
=
\left\{
\begin{pmatrix}
	\rho(a)&0\\
	0&\rho(a)
\end{pmatrix}
:a\in F_2
\right\}
\leq\operatorname{SL}_4(\mathbb Z).
\]
Thus $\Delta_\rho$ is a   free subgroup of rank two.

\begin{proposition}
	There is no algorithm which, given matrices promised to be a free basis of
	a free subgroup
	\(
	K\leq\operatorname{SL}_4(\mathbb Z),
	\)
	decides whether
	\[
	K\cap\Delta_\rho\neq\{1\}.
	\]
\end{proposition}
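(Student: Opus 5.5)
We reduce from Corollary~\ref{cor:target-F2}. Given an instance $g,h\colon F_Y\to F_2$ with $h$ injective, we take, for each generator $y\in Y$, the block-diagonal matrix
\[
M_y=\begin{pmatrix}\rho(yg)&0\\0&\rho(yh)\end{pmatrix}\in\operatorname{SL}_4(\mathbb Z),
\]
and set $K=\langle M_y\mid y\in Y\rangle$. The map $\Phi\colon F_Y\to\operatorname{SL}_4(\mathbb Z)$, $x\mapsto \operatorname{diag}(\rho(xg),\rho(xh))$, is a homomorphism; the anti-/covariance conventions (right actions in the paper versus matrix products) are handled by using $\rho$ as a homomorphism on the appropriate side, or by replacing $g,h$ with their compositions with inversion, which does not affect equalizers. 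All of this is clearly effective from $g,h$.

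First we check the promise. If $x\in\ker\Phi$, then $\rho(xh)=1$. Since $\rho$ and $h$ are injective, this gives $x=1$. Hence $\Phi$ is injective, $K\cong F_Y$ is free, and $\{M_y\}$ is a free basis of $K$, as the promise requires.

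Next we identify the intersection. An element $\Phi(x)\in K$ lies in $\Delta_\rho$ if and only if $\rho(xg)=\rho(xh)$. By injectivity of $\rho$ this holds if and only if $xg=xh$. Therefore
\[
K\cap\Delta_\rho=\Phi(\Eq(g,h)),
\]
and since $\Phi$ is injective, $K\cap\Delta_\rho\neq\{1\}$ if and only if $\Eq(g,h)\neq\{1\}$. An algorithm for the proposition would therefore decide the PCP for pairs into $F_2$ with $h$ injective, contradicting Corollary~\ref{cor:target-F2}.

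The only delicate point is the freeness promise. It relies entirely on $h$ being injective: injectivity of $g$ is not available, but one injective coordinate suffices to make $\Phi$ injective. The properties of $\rho$ used are just faithfulness; hyperbolicity is not needed here and presumably matters only for the later stabilizer and corner statements.
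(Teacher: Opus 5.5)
Your proof is correct and follows the paper's argument: the paper also sets $x\eta=\operatorname{diag}(xg\rho,xh\rho)$, deduces injectivity (hence the free-basis promise) from injectivity of $h$ and $\rho$ alone, and identifies $K\cap\Delta_\rho$ with the image of $\Eq(g,h)$. Your worry about left/right conventions is unnecessary: maps written on the right are still homomorphisms, so $x\mapsto\operatorname{diag}(xg\rho,xh\rho)$ is already a homomorphism without any adjustment.
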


\begin{proof}
	Let
	\[
	g,h:F_Y\longrightarrow F_2
	\]
	be an instance from Corollary~\ref{cor:target-F2}, with $h$ injective. For
	$x\in F_Y$, put
	\[
	x\eta
	=
	\begin{pmatrix}
		xg\rho&0\\
		0&xh\rho
	\end{pmatrix}
	\]
	and let $K=F_Y\eta$. Since $h$ and $\rho$ are injective, so is $\eta$.
	Hence, if $Y=\{y_1,\ldots,y_r\}$, then the matrices
	\(
	y_1\eta,\ldots,y_r\eta
	\)
	form a free basis of $K$.
	
	Moreover,
	\[
	K\cap\Delta_\rho
	=
	\{x\eta:x\in\operatorname{Eq}(g,h)\}.
	\]
	Indeed, $x\eta$ lies in $\Delta_\rho$ if and only if
	$xg\rho=xh\rho$, which, by injectivity of $\rho$, is equivalent to
	$xg=xh$. The result follows from Corollary~\ref{cor:target-F2}.
\end{proof}

Let
\(
V=M_2(\mathbb Z),
\)
viewed as a free abelian group of rank four. The group
\(
\operatorname{SL}_2(\mathbb Z)\times
\operatorname{SL}_2(\mathbb Z)
\)
acts on \(V\) by
\begin{equation}\label{eq:left-right-action}
	(A,B)\cdot X=AXB^{-1}.
\end{equation}
After choosing an integral basis of \(V\), the action of $(A,B)$ on $V$  is represented by
matrices in \(\operatorname{SL}_4(\mathbb Z)\).

\begin{lemma}\label{lem:matrix-diagonal-stabilizer}
	The action~\eqref{eq:left-right-action} has the following properties.
	\begin{enumerate}
		\item Every \((A,B)\in
		\operatorname{SL}_2(\mathbb Z)\times\operatorname{SL}_2(\mathbb Z)\)
		acts on \(V\) with determinant one.
		
		\item The restriction of the action to
		\(
		\rho(F_2)\times\rho(F_2)
		\)
		is faithful.
		
		\item For \(A,B\in\rho(F_2)\), one has
		\[
		(A,B)\cdot I_2=I_2
		\quad\Longleftrightarrow\quad
		A=B.
		\]
	\end{enumerate}
\end{lemma}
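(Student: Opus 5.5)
The three items are handled one at a time; each is a short linear-algebra argument, and the only input specific to this paper is that every nonidentity element of \(\rho(F_2)\) is hyperbolic.

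For item 1, the map \(X\mapsto AXB^{-1}\) factors as the composition of \(L_A\colon X\mapsto AX\) and \(R_{B^{-1}}\colon X\mapsto XB^{-1}\). So it suffices to compute the determinants of left and right multiplication on \(M_2(\mathbb Z)\). In the basis of matrix units ordered by columns, \(L_A\) is block diagonal with two copies of \(A\). Hence \(\det L_A=(\det A)^2=1\). In the analogous basis ordered by rows, \(R_{B^{-1}}\) is block diagonal with two copies of \((B^{-1})^{T}\), so its determinant is \((\det B)^{-2}=1\). Determinant is independent of the basis, so the action lands in \(\operatorname{SL}_4(\mathbb Z)\). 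Integrality is clear, since all the maps preserve \(M_2(\mathbb Z)\).

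For item 3, \((A,B)\cdot I_2=AB^{-1}\). This equals \(I_2\) if and only if \(A=B\). This holds for all \(A,B\in \operatorname{SL}_2(\mathbb Z)\), not only those in \(\rho(F_2)\).

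For item 2, which is the only step with any content, suppose \((A,B)\in\rho(F_2)\times\rho(F_2)\) acts trivially. Then \(AXB^{-1}=X\) for all \(X\in M_2(\mathbb Z)\), and by linearity for all \(X\in M_2(\mathbb Q)\).
\begin{itemize}
\item Taking \(X=I_2\) gives \(A=B\).
\item The condition then says that \(A\) commutes with every \(2\times 2\) matrix, so \(A\) is scalar. Since \(\det A=1\), we get \(A=\pm I_2\).
\item Since \(\operatorname{tr}(-I_2)=-2\) has absolute value two, \(-I_2\) is not hyperbolic. As every nonidentity element of \(\rho(F_2)\) is hyperbolic, \(-I_2\notin\rho(F_2)\). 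Hence \(A=B=I_2\).
\end{itemize}
So the kernel of the restricted action is trivial and the action is faithful. The only potential obstacle is excluding \(-I_2\), and the hyperbolicity property of \(\rho\), already fixed before the lemma, settles it.
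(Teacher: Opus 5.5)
Your proposal is correct and follows the paper's proof essentially step by step: determinants via $L_A$ and $R_{B^{-1}}$, then faithfulness by evaluating at $I_2$ and deducing that $A$ is scalar, and $(A,B)\cdot I_2=AB^{-1}$ for item 3. The only cosmetic difference is in excluding $-I_2$: you use hyperbolicity ($|\operatorname{tr}(-I_2)|=2$), while the paper uses that $\rho(F_2)$ is torsion-free. Either argument works.
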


\begin{proof}
	The action law follows from
	\begin{align*}
		(A_1,B_1)\cdot\bigl((A_2,B_2)\cdot X\bigr)
		&=
		A_1(A_2XB_2^{-1})B_1^{-1}\\
		&=
		(A_1A_2)X(B_1B_2)^{-1}\\
		&=
		(A_1A_2,B_1B_2)\cdot X.
	\end{align*}
	
	Write \(L_A(X)=AX\) and \(R_{B^{-1}}(X)=XB^{-1}\). Under the
	identification of \(M_2(\mathbb Z)\) with the direct sum of its two
	columns, \(L_A\) is represented by the block diagonal matrix
	\[
	\begin{pmatrix}
		A&0\\
		0&A
	\end{pmatrix},
	\]
	and therefore
	\(
	\det L_A=(\det A)^2.
	\)
	Similarly, 
	\(
	\det R_{B^{-1}}=(\det B^{-1})^2,
	\)
	and so,
	\[
	\det(X\mapsto AXB^{-1})
	=
	(\det A)^2(\det B^{-1})^2
	=
	1.
	\]
	Suppose that \(A,B\in\rho(F_2)\) and that \((A,B)\) acts trivially on
	\(V\). Applying it to \(I_2\) gives
	\[
	AB^{-1}=I_2,
	\]
	and hence \(A=B\). It follows that
	\(
	AXA^{-1}=X
	\)
	for every \(X\in M_2(\mathbb Z)\). Therefore \(A\) is scalar, so
	\(
	A=B=\pm I_2.
	\)
	Since \(\rho(F_2)\) is torsion-free, it does not contain \(-I_2\). Hence
	\(
	A=B=I_2,
	\)
	which proves faithfulness.
	
	Finally,
	\[
	(A,B)\cdot I_2=AB^{-1},
	\]
	and this is equal to \(I_2\) if and only if \(A=B\).
\end{proof}

We first apply this action to the stabilizer problem. We remark that our construction allows to prove the undecidability of the stabilizer problem in the stronger version where our subgroup is free, our vector being stabilized is the identity and the matrices have determinant $\pm 1$.

\begin{theorem}\label{thm:matrix-stabilizer}
	There is no algorithm which, given a finite tuple of matrices promised to
	be a free basis of a subgroup
	\(
	G\leq\operatorname{SL}_4(\mathbb Z),
	\)
	decides whether
	\[
	\operatorname{Stab}_G(e_1)
	=
	\{M\in G:Me_1=e_1\}
	\]
	is nontrivial.
\end{theorem}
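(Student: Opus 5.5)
We reduce from Corollary~\ref{cor:target-F2}, using the action \eqref{eq:left-right-action} of $\rho(F_2)\times\rho(F_2)$ on $V=M_2(\mathbb Z)$. Given an instance $g,h\colon F_Y\to F_2$ with $h$ injective, we define $\eta\colon F_Y\to\operatorname{SL}_2(\mathbb Z)\times\operatorname{SL}_2(\mathbb Z)$ by
\[
x\eta=(xg\rho,\;xh\rho).
\]
We then compose with the representation of the action on $V$. We choose the integral basis of $V$ so that $I_2$ is the first basis vector. For instance, we take $E_{11}+E_{22}$, $E_{12}$, $E_{21}$, $E_{22}$; this is a unimodular change from the standard basis, since it is triangular with ones on the diagonal. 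In this basis $I_2$ is $e_1$, and each $x\eta$ acts by a matrix $M_x\in\operatorname{SL}_4(\mathbb Z)$ by Lemma~\ref{lem:matrix-diagonal-stabilizer}(1). The input tuple is $M_{y_1},\ldots,M_{y_r}$, and these are computable from $g,h$ and the fixed generators of $\rho$.

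Next we check the freeness promise. The map $x\mapsto M_x$ is a homomorphism. It is injective because $h\rho$ is injective, so $x\eta$ is injective, and the action restricted to $\rho(F_2)\times\rho(F_2)$ is faithful by Lemma~\ref{lem:matrix-diagonal-stabilizer}(2). Hence $M_{y_1},\ldots,M_{y_r}$ form a free basis of $G=\{M_x: x\in F_Y\}$. One small point: the action as written is a left action, $(A,B)\cdot X=AXB^{-1}$, while the paper writes homomorphisms on the right. The composite $x\mapsto M_x$ might therefore be an antihomomorphism. If so, we replace $\eta$ by $x\mapsto (x^{-1}g\rho,x^{-1}h\rho)$, or note that an anti-isomorphism onto its image still makes the images of a free basis a free basis. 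This does not affect the argument.

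Finally we identify the stabilizer. By Lemma~\ref{lem:matrix-diagonal-stabilizer}(3),
\[
M_xe_1=e_1\iff (xg\rho)(xh\rho)^{-1}=I_2\iff xg\rho=xh\rho\iff xg=xh,
\]
where the last step uses injectivity of $\rho$. Hence $\operatorname{Stab}_G(e_1)=\{M_x: x\in\Eq(g,h)\}$. Since $x\mapsto M_x$ is injective, this stabilizer is nontrivial exactly when $\Eq(g,h)\neq\{1\}$. A decision procedure for the stabilizer problem would therefore decide the PCP instances of Corollary~\ref{cor:target-F2}, which is impossible.

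The only real care needed is the choice of basis making $I_2$ equal to $e_1$ with determinant one preserved, and the left/right convention; both are routine. The substantive inputs, faithfulness and the stabilizer characterization, are already provided by Lemma~\ref{lem:matrix-diagonal-stabilizer}.
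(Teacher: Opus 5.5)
Your proposal is correct and follows the paper's proof essentially verbatim: the same reduction from Corollary~\ref{cor:target-F2}, the same integral basis $(I_2,E_{12},E_{21},E_{22})$ of $M_2(\mathbb Z)$ so that $I_2$ becomes $e_1$, with injectivity and the stabilizer characterization both taken from Lemma~\ref{lem:matrix-diagonal-stabilizer}. Your worry about an antihomomorphism is unnecessary but harmless: $(A,B)\cdot X=AXB^{-1}$ is a left action and matrices act on column vectors from the left, so $(A,B)\mapsto\Lambda(A,B)$ is already a homomorphism.
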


\begin{proof}
	Let
	\(
	g,h:F_Y\longrightarrow F_2
	\)
	be an instance from Corollary~\ref{cor:target-F2}, with \(h\) injective.
	
	Let \(E_{ij}\) denote the standard matrix units and choose the integral
	basis
	\[
	\mathcal B=(I_2,E_{12},E_{21},E_{22})
	\]
	of \(V\). With respect to this basis, the coordinate vector of \(I_2\) is
	\(e_1\).
	
	For \(A,B\in\operatorname{SL}_2(\mathbb Z)\), let
	\(
	\Lambda_{\mathcal B}(A,B)\in\operatorname{SL}_4(\mathbb Z)
	\)
	denote the matrix, with respect to \(\mathcal B\), of the linear
	transformation
	\[
	X\longmapsto (A,B)\cdot X=AXB^{-1}.
	\]
	For every \(y\in Y\), put
	\[
	M_y=
	\Lambda_{\mathcal B}\bigl(yg\rho,yh\rho\bigr).
	\]
	Let
	\(
	G=\langle M_y:y\in Y\rangle
	\)
	
	and 
	\(
	\eta:F_Y\longrightarrow G
	\)
	be the projection given by
	\[
	x\eta=
	\Lambda_{\mathcal B}\bigl(xg\rho,xh\rho\bigr).
	\]
	Since \(h\) and \(\rho\) are injective, and, by 	Lemma~\ref{lem:matrix-diagonal-stabilizer}, the action is faithful on
	\(\rho(F_2)\times\rho(F_2)\), then \(\eta\) is injective, and 
	$\{M_y\mid y\in Y\}$
	is a free basis of \(G\).
	
	For every \(x\in F_Y\), the coordinate vector \(e_1\) represents \(I_2\),
	and therefore Lemma~\ref{lem:matrix-diagonal-stabilizer} gives
	\begin{align*}
		(x\eta)e_1=e_1
		&\Longleftrightarrow
		\bigl(xg\rho,xh\rho\bigr)\cdot I_2=I_2\\
		&\Longleftrightarrow
		xg\rho=xh\rho\\
		&\Longleftrightarrow
		xg=xh.
	\end{align*}
	Since \(\eta\) is injective, \(G\) contains a nonidentity element fixing
	\(e_1\) if and only if \(\Eq(g,h)\) contains a nontrivial element. The
	result follows from Corollary~\ref{cor:target-F2}.
\end{proof}

\begin{corollary}
	There is a fixed nonzero square-zero matrix $N\in M_5(\mathbb Z)$ such that
	there is no algorithm which, given a free basis of a free subgroup
	\[
	G\leq\operatorname{SL}_5(\mathbb Z),
	\]
	decides whether $G$ contains a nonidentity matrix commuting with $N$.
\end{corollary}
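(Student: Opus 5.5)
The plan is to reduce directly to Theorem~\ref{thm:matrix-stabilizer} by embedding $\operatorname{SL}_4(\mathbb Z)$ into $\operatorname{SL}_5(\mathbb Z)$ as a lower-right block. The rank-one nilpotent $N$ is chosen so that commuting with $N$ is exactly the condition of fixing the relevant basis vector.

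Index the standard basis of $\mathbb Z^5$ by $f_1,\dots,f_5$. Consider the embedding
\[
\kappa:\operatorname{SL}_4(\mathbb Z)\to\operatorname{SL}_5(\mathbb Z),\qquad M\mapsto \begin{pmatrix}1&0\\0&M\end{pmatrix},
\]
so that $M$ acts on $\langle f_2,\dots,f_5\rangle$ and $e_1$ of $\mathbb Z^4$ corresponds to $f_2$. Take $N=f_2f_1^{T}$, the matrix unit $E_{21}$. It is nonzero, fixed, and satisfies $N^2=f_2(f_1^Tf_2)f_1^T=0$.

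For $M'=\kappa(M)$ we compare the two products:
\[
NM'=f_2\,(f_1^TM')=f_2f_1^T,\qquad M'N=(M'f_2)f_1^T,
\]
because the first row of $M'$ is $f_1^T$. Hence $M'$ commutes with $N$ if and only if $M'f_2=f_2$, that is, if and only if $Me_1=e_1$.

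Now let $M_1,\dots,M_r$ be an instance of Theorem~\ref{thm:matrix-stabilizer}: a free basis of a subgroup $G_0\le\operatorname{SL}_4(\mathbb Z)$. Map it to $\kappa(M_1),\dots,\kappa(M_r)$ and put $G=\kappa(G_0)$. Since $\kappa$ is an injective homomorphism, these matrices form a free basis of $G$, and $\kappa$ maps nonidentity elements to nonidentity elements. By the computation above, $G$ contains a nonidentity matrix commuting with $N$ if and only if $\operatorname{Stab}_{G_0}(e_1)\neq\{1\}$. This reduction is computable, so a decision procedure for the commuting question would decide the stabilizer problem, contradicting Theorem~\ref{thm:matrix-stabilizer}.

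There is no serious obstacle. The only point needing care is the orientation of $N$: with $f_1f_2^T$ instead of $f_2f_1^T$, commuting would encode that the row vector $f_2^T$ is fixed, not the column vector $f_2$. In that case one would instead embed $M\mapsto\kappa\bigl((M^{-1})^T\bigr)$, which is still an injective homomorphism and turns the row condition back into $Me_1=e_1$. With the choice $N=f_2f_1^T$ made above, this adjustment is unnecessary.
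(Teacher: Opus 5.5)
Your proposal is correct and is essentially the paper's argument. The paper embeds $\operatorname{SL}_4(\mathbb Z)$ as $M\mapsto\operatorname{diag}(M,1)$ and takes $N=E_{15}$; you use the mirror-image choice $\operatorname{diag}(1,M)$ with $N=E_{21}$. In both cases, commuting with the rank-one square-zero $N$ is equivalent to fixing the relevant basis vector, and the result then follows from Theorem~\ref{thm:matrix-stabilizer}. Your explicit check, $NM'=f_2f_1^T$ and $M'N=(M'f_2)f_1^T$, is the ``direct computation'' that the paper leaves to the reader.
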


\begin{proof}
	Let
	\[
	N=
	\begin{pmatrix}
		0& 0 & 0& 0 &1\\
				0& 0 & 0& 0 &0\\
								0& 0 & 0& 0 &0\\
												0& 0 & 0& 0 &0\\
																0& 0 & 0& 0 &0\\

	\end{pmatrix}.
	\]
	Then $N\neq0$ and $N^2=0$. Embed
	$\operatorname{SL}_4(\mathbb Z)$ in $\operatorname{SL}_5(\mathbb Z)$ by
	$M\mapsto\operatorname{diag}(M,1)$. A direct computation gives
	\[
	\operatorname{diag}(M,1)N=N\operatorname{diag}(M,1)
	\quad\Longleftrightarrow\quad
	Me_1=e_1.
	\]
	The result follows from Theorem~\ref{thm:matrix-stabilizer}.
\end{proof}

We now prove the undecidability of the upper-right problem for integer matrices, again in the stronger case where the subgroup is promised to be free and the matrices have determinant $\pm 1$.

\begin{theorem}\label{thm:matrix-corner}
	There is no algorithm which, given a finite tuple of matrices promised to
	be a free basis of a subgroup
	\(
	G\leq\operatorname{SL}_5(\mathbb Z),
	\)
	decides whether \(G\) contains a nonidentity matrix whose
	\((1,5)\)-entry is zero.
\end{theorem}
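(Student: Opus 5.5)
The plan is to reduce from Theorem~\ref{thm:matrix-stabilizer}'s construction, replacing the condition ``$Me_1=e_1$'' by a single linear condition on $M$ that can be read off an upper-right corner after a unipotent conjugation. Start with an instance $g,h\colon F_Y\to F_2$ from Corollary~\ref{cor:target-F2}, with $h$ injective. Let $x\eta=\Lambda_{\mathcal B}(xg\rho,xh\rho)\in\operatorname{SL}_4(\mathbb Z)$ be the injective homomorphism from the proof of Theorem~\ref{thm:matrix-stabilizer}.

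The key observation is the following. For $A,B\in\rho(F_2)$ we have $AB^{-1}\in\rho(F_2)$. Every nonidentity element there is hyperbolic, so $|\operatorname{tr}(AB^{-1})|>2$ unless $A=B$. Hence
\[
\operatorname{tr}\bigl((A,B)\cdot I_2\bigr)=2\quad\Longleftrightarrow\quad A=B .
\]
This is a linear condition on the matrix $M=\Lambda_{\mathcal B}(A,B)$. The coordinate vector of $I_2$ in $\mathcal B$ is $s=e_1$. In the basis $\mathcal B=(I_2,E_{12},E_{21},E_{22})$ the trace functional has row vector $r^{T}=(2,0,0,1)$. Thus $\operatorname{tr}\bigl((A,B)\cdot I_2\bigr)=r^{T}Ms$ and $r^{T}s=2$.

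Next, encode $r^{T}(I-M)s$ as a corner entry. Define the integral unipotent matrix
\[
U=\begin{pmatrix}1&r^{T}&0\\ 0&I_4&s\\ 0&0&1\end{pmatrix}\in\operatorname{SL}_5(\mathbb Z),
\]
written in blocks of sizes $1,4,1$, and set $\Phi(M)=U\operatorname{diag}(1,M,1)U^{-1}$. This is an injective homomorphism $\operatorname{SL}_4(\mathbb Z)\to\operatorname{SL}_5(\mathbb Z)$, because $U^{-1}$ is again integral. A direct block computation, using
\[
U^{-1}=\begin{pmatrix}1&-r^{T}&r^{T}s\\ 0&I_4&-s\\ 0&0&1\end{pmatrix},
\]
shows that the $(1,5)$-entry of $\Phi(M)$ equals $r^{T}s-r^{T}Ms=r^{T}(I-M)s$. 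Let $G$ be generated by $\Phi(y\eta)$ for $y\in Y$. Since $\Phi\circ\eta$ is injective, these matrices form a free basis of $G$, and they are computable from $g,h$.

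Finally, for $x\in F_Y$ the $(1,5)$-entry of $\Phi(x\eta)$ is $2-\operatorname{tr}\bigl((xg\rho)(xh\rho)^{-1}\bigr)$. By the key observation, this vanishes if and only if $xg\rho=xh\rho$, which holds if and only if $xg=xh$ by injectivity of $\rho$. Injectivity of $\Phi\circ\eta$ turns ``nonidentity matrix of $G$'' into ``nontrivial $x$''. So $G$ has a nonidentity element with zero $(1,5)$-entry exactly when $\Eq(g,h)\neq\{1\}$, and Corollary~\ref{cor:target-F2} gives undecidability.

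The main obstacle is finding a \emph{single linear} functional whose vanishing detects equality $A=B$ on all of $\rho(F_2)\times\rho(F_2)$. The stabilizer condition $Me_1=e_1$ consists of four linear equations, not one. The trace, combined with hyperbolicity of the embedding $\rho$, resolves this. The remaining work is the routine block-matrix verification of the corner formula.
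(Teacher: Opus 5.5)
Your key idea (trace functional plus hyperbolicity of $\rho(F_2)$ turns $A=B$ into one linear condition, and a unipotent conjugation moves it into a corner) is the idea of the paper's proof. The gap is that the construction lands in the wrong dimension. You write $U$ in blocks of sizes $1,4,1$, so $U$ and $\operatorname{diag}(1,M,1)$ are $6\times 6$. Thus $\Phi$ maps $\operatorname{SL}_4(\mathbb Z)$ into $\operatorname{SL}_6(\mathbb Z)$, not $\operatorname{SL}_5(\mathbb Z)$. The quantity $r^{T}s-r^{T}Ms$ from your block computation is the $(1,6)$-entry of $\Phi(M)$. The actual $(1,5)$-entry is $r^{T}Me_4-1=(B^{-1}A)_{22}-1$, which does not characterize $A=B$; it vanishes, for instance, when $B^{-1}A=\bigl(\begin{smallmatrix}3&2\\1&1\end{smallmatrix}\bigr)$. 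As written, you prove the upper-right-corner result only in $\operatorname{SL}_6(\mathbb Z)$, whereas the content of the theorem is the bound five.

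What is missing is that only one extra dimension is needed, because one of $r,s$ can be made a standard coordinate vector. The paper does this for $r$. Since the trace is a primitive functional on $M_2(\mathbb Z)$ ($\operatorname{tr}E_{11}=1$), the integral basis $\mathcal C=(E_{11},E_{12},E_{21},E_{22}-E_{11})$ makes the trace the first coordinate. Let $P=\bigl(\begin{smallmatrix}I_4&v\\0&1\end{smallmatrix}\bigr)$, where $v$ is the coordinate vector of $I_2$. Then $P^{-1}\operatorname{diag}(\Lambda_{\mathcal C}(A,B),1)P$ is a $5\times 5$ integral matrix whose $(1,5)$-entry is the first coordinate of $\Lambda_{\mathcal C}(A,B)v-v$, namely $\operatorname{tr}(AB^{-1})-2$. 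Alternatively, in your basis $\mathcal B$ you already have $s=e_1$, so you can drop the column block of $U$:
\[
\begin{pmatrix}1&r^{T}\\0&I_4\end{pmatrix}\begin{pmatrix}1&0\\0&M\end{pmatrix}\begin{pmatrix}1&-r^{T}\\0&I_4\end{pmatrix}=\begin{pmatrix}1&r^{T}(M-I_4)\\0&M\end{pmatrix}.
\]
Its $(1,2)$-entry is $r^{T}Ms-r^{T}s=\operatorname{tr}(AB^{-1})-2$, and reordering $\mathcal B$ so that $I_2$ comes last moves this entry to position $(1,5)$. With either repair, the rest of your argument goes through unchanged: injectivity of the composite, the hyperbolicity step, and the reduction to Corollary~\ref{cor:target-F2}.
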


\begin{proof}
	Let
	\(
	g,h:F_Y\longrightarrow F_2
	\)
	be an instance from Corollary~\ref{cor:target-F2}, with \(h\) injective.
	
	Choose the integral basis
	\[
	\mathcal C=(E_{11},E_{12},E_{21},E_{22}-E_{11})
	\]
	of \(V\). The first coordinate of \(X\in V\) with respect to
	\(\mathcal C\) is \(\operatorname{tr}(X)\). Indeed, if
	\[
	X=
	aE_{11}+bE_{12}+cE_{21}+d(E_{22}-E_{11}),
	\]
	then
	\(
	\operatorname{tr}(X)=a.
	\)
	
	For \(A,B\in\operatorname{SL}_2(\mathbb Z)\), let
	\(
	\Lambda_{\mathcal C}(A,B)\in\operatorname{SL}_4(\mathbb Z)
	\)
	denote the matrix, with respect to \(\mathcal C\), of the transformation
	\[
	X\longmapsto (A,B)\cdot X=AXB^{-1}.
	\]
	The coordinate vector of \(I_2\) with respect to \(\mathcal C\) is
	\(
	v=(2,0,0,1)^{\mathsf T}.
	\)

	Define
	\[
	\widehat\Lambda(A,B)
	=
	\begin{pmatrix}
		\Lambda_{\mathcal C}(A,B)&
		\Lambda_{\mathcal C}(A,B)v-v\\
		0&1
	\end{pmatrix}.
	\]
	If
	\(
	P=
	\begin{pmatrix}
		I_4&v\\
		0&1
	\end{pmatrix},
	\)
	then
		\(
	P^{-1}=
	\begin{pmatrix}
		I_4&-v\\
		0&1
	\end{pmatrix},
	\)
	and
	\[
	\widehat\Lambda(A,B)
	=
	P^{-1}
	\begin{pmatrix}
		\Lambda_{\mathcal C}(A,B)&0\\
		0&1
	\end{pmatrix}
	P.
	\]
	Hence, the mapping
	\(
	(A,B)\longmapsto\widehat\Lambda(A,B)
	\)
	is a homomorphism from 
	\(
	\operatorname{SL}_2(\mathbb Z)\times
	\operatorname{SL}_2(\mathbb Z)\) to
	$\operatorname{SL}_5(\mathbb Z).$
	
	Its restriction to
	\(\rho(F_2)\times\rho(F_2)\) is injective by
	Lemma~\ref{lem:matrix-diagonal-stabilizer}.
	
	The last column above the final entry of \(\widehat\Lambda(A,B)\) is the
	coordinate vector of
	\[
	(A,B)\cdot I_2-I_2
	=
	AB^{-1}-I_2
	\]
	with respect to \(\mathcal C\). Since the first coordinate in this basis is
	the trace, we obtain
	\begin{align*}
		\widehat\Lambda(A,B)_{1,5}
		&=
		\operatorname{tr}(AB^{-1}-I_2)\\
		&=
		\operatorname{tr}(AB^{-1})-2.
	\end{align*}
	
	For \(A,B\in\rho(F_2)\), the element \(AB^{-1}\) also belongs to
	\(\rho(F_2)\). By the choice of \(\rho\), every nonidentity element of
	\(\rho(F_2)\) has trace of absolute value greater than two. Hence
	\begin{equation}\label{eq:corner-equalizer}
		\widehat\Lambda(A,B)_{1,5}=0
		\quad\Longleftrightarrow\quad
		A=B.
	\end{equation}
	
	For every \(y\in Y\), put
	\(
	N_y=
	\widehat\Lambda\bigl(yg\rho,yh\rho\bigr)
	\)
	and let
	\(
	G=\langle N_y:y\in Y\rangle.
	\)
	Define
	\(
	\widehat\eta:F_Y\longrightarrow G
	\)
	by
	\[
	x\widehat\eta=
	\widehat\Lambda\bigl(xg\rho,xh\rho\bigr).
	\]
	As in the proof of Theorem~\ref{thm:matrix-stabilizer},
	\(\widehat\eta\) is injective. Therefore
	\(
	(N_y)_{y\in Y}
	\)
	i a free basis of \(G\).
	
	By~\eqref{eq:corner-equalizer}, for every \(x\in F_Y\),
	\[
	(x\widehat\eta)_{1,5}=0
	\quad\Longleftrightarrow\quad
	xg=xh.
	\]
	Since \(\widehat\eta\) is injective, \(G\) contains a nonidentity matrix
	with zero \((1,5)\)-entry if and only if \(\Eq(g,h)\) contains a
	nontrivial element. The result follows from
	Corollary~\ref{cor:target-F2}.
\end{proof}

By adjoining identity blocks,
Theorem~\ref{thm:matrix-stabilizer} remains valid in every dimension at
least four. By adjoining identity blocks and conjugating by a fixed
permutation matrix which fixes the first coordinate and sends the fifth
coordinate to the last,
Theorem~\ref{thm:matrix-corner} remains valid in every dimension at least
five. Thus the stabilizer theorem lowers the integral dimension bound from
nine to four, while the upper-right-corner theorem lowers the rational
dimension bound from nine to five and, at the same time, establishes
undecidability over the integers in the stronger case where the subgroup is free and the matrices have determinant $\pm 1$.

\section*{Acknowledgements}
The author thanks Jordi Delgado for finding several typos in a previous version of this paper. The author also thanks Vladimir Shpilrain and Enric Ventura for bibliographic references. The author was supported by national funds through the Funda\c{c}\~ao para a Ci\^encia e a Tecnologia, FCT, under the project UID/04674/2025.

	\bibliographystyle{plain}
\bibliography{Bibliografia}

\end{document}